\def\@seccntDot{.}
\def\@seccntformat#1{\csname the#1\endcsname\@seccntDot\hskip 0.5em}
\renewcommand\section{\@startsection{section}{1}{\z@}%
{18\p@ \@plus 6\p@ \@minus 3\p@}%
{9\p@ \@plus 6\p@ \@minus 3\p@}%
{\large\bfseries\boldmath}}
\renewcommand\subsection{\@startsection{subsection}{2}{\z@}%
{12\p@ \@plus 6\p@ \@minus 3\p@}%
{3\p@ \@plus 6\p@ \@minus 3\p@}%
{\bfseries\boldmath}}
\renewcommand\subsubsection{\@startsection{subsubsection}{3}{\z@}%
{12\p@ \@plus 6\p@ \@minus 3\p@}%
{\p@}%
{\bfseries\boldmath}}
\theoremstyle{plain}
\newtheorem{theorem}{Theorem}[section]
\newtheorem{lemma}{Lemma}[section]
\newtheorem{problem}{Problem}[section]
\newtheorem{corollary}{Corollary}[section]
\newtheorem{conjecture}{Conjecture}[section]
\theoremstyle{definition}
\newtheorem{remark}{Remark}[section]
\numberwithin{equation}{section}
\DeclareMathOperator{\ex}{ex}
\DeclareMathOperator{\EX}{EX}
\DeclareMathOperator{\spex}{spex}
\DeclareMathOperator{\SPEX}{SPEX}
\title{Spectral Tur\'an-type problems on sparse spanning graphs}
\author{Lele Liu\footnote{School of Mathematical Sciences, Anhui University, Hefei 230601, 
P.R. China. E-mail: \texttt{liu@ahu.edu.cn} (L. Liu). Supported by the National
Nature Science Foundation of China (No. 12001370)}
~~ and ~~
Bo Ning\footnote{Corresponding author. College of Computer Science, Nankai University, Tianjin 300350, P.R. China.
E-mail: \texttt{bo.ning@nankai.edu.cn} (B. Ning). Partially supported by the National Nature Science
Foundation of China (No. 11971346).}}
\date{}
\begin{document}
\maketitle

\begin{abstract}
Let $F$ be a graph and $\SPEX (n, F)$ be the class of $n$-vertex graphs which
attain the maximum spectral radius and contain no $F$ as a subgraph. Let 
$\EX (n, F)$ be the family of $n$-vertex graphs which contain maximum number 
of edges and no $F$ as a subgraph. It is a fundamental problem in spectral extremal
graph theory to characterize all graphs $F$ such that $\SPEX (n, F)\subseteq \EX (n, F)$
when $n$ is sufficiently large. Establishing the conjecture of Cioab\u{a}, Desai 
and Tait [European J. Combin., 2022], Wang, Kang, and Xue [J. Combin. Theory Ser. B, 2023] 
prove that: for any graph $F$ such that the graphs in $\EX (n, F)$ are Tur\'{a}n 
graphs plus $O(1)$ edges, $\SPEX (n, F)\subseteq \EX (n, F)$ for sufficiently 
large $n$. In this paper, we prove that $\SPEX (n, F)\subseteq \EX (n, F)$ for 
sufficiently large $n$, where $F$ is an $n$-vertex graph with no isolated vertices 
and $\Delta (F) \leq \sqrt{n}/40$. We also prove a signless Laplacian spectral radius 
version of the above theorem. These results give new contribution to the open problem 
mentioned above, and can be seen as spectral analogs of a theorem of Alon and Yuster 
[J. Combin. Theory Ser. B, 2013]. Furthermore, as immediate corollaries, we have 
tight spectral conditions for the existence of several classes of special graphs, 
including clique-factors, $k$-th power of Hamilton cycles and $k$-factors in graphs. 
The first special class of graphs gives a positive answer to a problem of Feng,
and the second one extends a previous result of Yan et al.
\par\vspace{2mm}

\noindent{\bfseries Keywords:} Spectral Tur\'an-type problem; Spanning subgraph; Spectral radius
\par\vspace{2mm}

\noindent{\bfseries AMS Classification:} 05C35; 05C50; 15A18
\end{abstract}

\section{Introduction}
Extremal graph theory, one of the most important branches in graph theory, aims to 
characterize how global properties of a graph control the local structure of the graph. 
Given a graph $F$, let $\EX (n, F)$ be the family of $n$-vertex graphs with no copy 
of $F$ as a subgraph, containing the maximum number of edges. We denote by $\ex (n, F)$ 
the number of edges in a member of $\EX (n, F)$. One central problem in extremal graph 
theory is to study the behavior of the function $\ex (n, F)$ and to determine the 
classes of graphs in $\EX (n, F)$. One cornerstone result in this area is the Tur\'an 
Theorem \cite{Turan1941} in 1941, which states that the maximum number of edges in 
an $n$-vertex graph containing no $K_{r+1}$ as a subgraph equals to $\ex (T_{n,r})$, 
where $T_{n,r}$ is the $r$-partite Tur\'an graph, i.e., the complete $r$-partite 
graph such that every two parts have as equal size as possible. Erd\H{o}s, Stone and 
Simonovits \cite{ErdosStone1947,ErdosSimonovits1966} proved that
$\ex (n, F)=((1-1/r)/2+o(1)) n^2$ with given $\chi(F)=r+1\geq 2$ and sufficiently 
large $n$. From this result, one can see that $\ex (n,F)=o(n^2)$ for $\chi(F)=2$.
Till now, much of attention are paid on the study of Tur\'an functions of bipartite
graphs but little of exact results are obtained. For more development on extremal
graph theory, we refer the reader to \cite{Furedi-Simonovits2013}.

Compared with extremal graph theory, spectral extremal graph theory is a young but 
active branch of graph theory. Let $F$ be a given graph. We denote by $\SPEX_A(n,F)$ 
the class of graphs $G$ that attain the maximum adjacency spectral radius among all 
$n$-vertex graphs which do not contain $F$ as a subgraph. Let $\spex_A(n, F)$ be the 
spectral radius of graphs in $\SPEX_A(n, F)$ (when there is no danger of ambiguity, we
use $\SPEX (n, F)$ and $\spex (n, F)$ instead of $\SPEX_A(n, F)$ and $\spex_A (n, F)$, 
respectively). In this area, Nikiforov \cite{Nikiforov2011} proposed to study spectral 
analogous problems of Tur\'an-type problems, i.e., to study the maximum spectral radius 
among the class of $n$-vertex graphs containing no $F$ as a subgraph. In particular, 
Nikiforov \cite{Nikiforov2007} proved that $\SPEX (n, K_{r+1})\subseteq \EX (n, K_{r+1})$.
Although this result was reported by Guiduli \cite{G96} in his Ph.D. Thesis independently, 
Nikiforov later extended this result to the class of color critical graphs and published 
several papers including stronger spectral Tur\'an theorems in different directions, see
\cite{Nikiforov2009,Kang-Nikiforov2014}.

It is very natural to propose the following problem.

\begin{problem}\label{Prob:1}
Let $F$ be any graph. Characterize all graphs $F$ such that
\begin{equation}\label{eq:spex-subet-ex}
\SPEX (n, F)\subseteq \EX (n, F)
\end{equation}
for sufficiently large $n$.
\end{problem}

A plenty of work were published related to Problem \ref{Prob:1} which focuses on non-bipartite 
graphs. Given a graph $H$ and an integer $r \geq 3$, the \emph{edge blow-up} of $H$, denoted 
by $H^r$, is obtained by replacing each edge of $H$ with a clique of order $r$, where the new 
vertices of the cliques are all distinct. As a spectral analog of Tur\'an numbers of $S_{k+1}^3$ 
due to Erd\H{o}s et al. \cite{EFGG95}, Cioab\u{a}, Feng, Tait and Zhang \cite{CFTZ20} proved
\eqref{eq:spex-subet-ex} in Problem \ref{Prob:1} holds for $S_{k+1}^3$, where $S_{k+1}$ is the 
star with $k+1$ vertices, and the final spectral extremal graph was determined by Zhai, Liu and 
Xue \cite{ZLX22}. Let $L_{s,k}$ be the graph constructed by $s$ triangles and $k$ odd cycles of 
length at least $5$ which share one common vertex. Li and Peng \cite{LP22} and Desai et al. 
\cite{DesaiKang2022} extended the result of Cioab\u{a} et al. \cite{CFTZ20} to the classes of 
graphs $L_{s,k}$ and $S_k^r$, respectively. Lin, Zhai and Zhao \cite{Lin-Zhai-Zhao2022} proved 
that $\SPEX (n, \Gamma_k) \subseteq \EX (n, \Gamma_k)$ for $n$ large enough, where $\Gamma_k$ 
is the family of graphs without $k$-edge-disjoint triangles. Ni, Wang and Kang \cite{Ni-Wang-Kang2023} 
proved that $\SPEX (n, M_k^r) \subseteq \EX (n, M_k^r)$ for $n$ large enough, where $M_k$ is 
a matching of size $k$. Generalizing this result, Wang, Ni, Kang and Fan \cite{Wang-Ni-Kang-Fan2023} 
showed that \eqref{eq:spex-subet-ex} in Problem \ref{Prob:1} holds for the edge blow-up of star 
forest. On the \emph{wheel graph} $W_t$ of order $t$, i.e., the graph formed by joining a vertex 
to all of the vertices in a cycle on $(t-1)$ vertices, Cioab\u{a}, Desai and Tait \cite{CDT22} 
showed $\SPEX (n, W_5) \subseteq \EX (n, W_5)$. In the same paper, Cioab\u{a}, Desai and 
Tait \cite{CDT22} proposed a conjecture related to a more general phenomenon as follows.

\begin{conjecture}[Cioab\u{a}, Desai and Tait \cite{CDT22}]\label{Conj:1}
Let $F$ be any graph such that the graphs in $\EX (n, F)$ are Tur\'an graphs plus $O(1)$ edges. 
Then $\SPEX (n, F) \subseteq \EX (n, F)$ for $n$ large enough.
\end{conjecture}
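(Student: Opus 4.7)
The plan is to prove the conjecture in three stages: (i) a spectral lower bound that forces any $G\in\SPEX(n,F)$ to be close to the Tur\'an graph, (ii) eigenvector analysis that pins down the structure of $G$, and (iii) a perturbation argument that upgrades spectral extremality to edge extremality.

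First, let $r=\chi(F)-1$. Since $T_{n,r}$ is $F$-free and remains $F$-free after adding $O(1)$ carefully placed edges (by the hypothesis on $\EX(n,F)$), any $G\in\SPEX(n,F)$ satisfies
\[\lambda_1(G)\ge\lambda_1(T_{n,r})=\bigl(1-1/r+o(1)\bigr)n.\]
Nikiforov's spectral stability theorem, together with the Erd\H os--Stone--Simonovits bound $\ex(n,F)=(1-1/r+o(1))\binom{n}{2}$, then implies that $G$ differs from $T_{n,r}$ in only $o(n^2)$ edges. In particular, $G$ admits an $r$-partition $V_1\cup\cdots\cup V_r$ with $|V_i|=n/r+o(n)$ and an almost-complete multipartite structure between parts.

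Second, I would exploit the Perron eigenvector $x$ of $G$. After normalizing $x$ so that $\max_v x_v=1$, the eigenequation $\lambda_1(G)x_v=\sum_{u\sim v}x_u$ together with the degree concentration from Step~1 yields $x_v=1-o(1)$ for all but an exceptional set $B$ of size $o(n)$. This uniformity allows me to treat low-weight vertices as negligible and to iteratively clean them up: any vertex with small eigenvector entry can be deleted and reattached to high-weight neighbors, strictly increasing $\lambda_1$ unless doing so creates a copy of $F$. Third, I would run a Kelmans-type edge-switching argument. If $u,v$ are distinct vertices with $x_u\ge x_v$ and some third vertex $w$ satisfies $v\sim w$, $u\not\sim w$, then swapping $vw$ for $uw$ strictly raises $\lambda_1$ whenever the resulting graph stays $F$-free. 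Applied to the nearly uniform eigenvector from Step~2, this forces the neighborhoods of vertices to be monotone in $x$ and pushes the extra (non-Tur\'an) edges into a very restricted configuration. Combined with the near-Tur\'an partition from Step~1, this says that the only way $G$ can fail to lie in $\EX(n,F)$ is if every edge-increasing swap is blocked by an embedded copy of $F$; a careful enumeration of such blocked swaps should then show that $G$ actually realizes the maximum edge count $\ex(n,F)$, hence $G\in\EX(n,F)$.

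The main obstacle, I expect, lies at the boundary of Step~3: bridging the $O(1)$-edge gap between ``close to Tur\'an'' and ``extremal.'' Spectral stability is too coarse at this scale, so one must quantitatively track how each potential extra edge contributes to $\lambda_1$ via the eigenvector values at its endpoints, and rule out pathological configurations in which a cluster of extra edges sits on a few atypical high-weight vertices and produces a spuriously large spectral radius. Handling this requires sharp control of $x$ on the exceptional set $B$---plausibly via a moment-method estimate of $(A^k\mathbf{1})_v$ or an iterative cleaning procedure that drives $|B|$ to zero---followed by a finite case analysis of all candidate $O(1)$-edge overlays of $T_{n,r}$ that keep the graph $F$-free.
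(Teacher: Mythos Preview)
The paper does not contain a proof of this statement. Conjecture~\ref{Conj:1} is merely quoted from \cite{CDT22}, and the paper immediately states that it was resolved by Wang, Kang and Xue in \cite{WKX23} (recorded as Theorem~\ref{Thm:WKX23}); no argument for it is given here. The paper's own results (Theorems~\ref{thm:spectral-version-Alon-result} and~\ref{thm:q-spectral-version-Alon-result}) concern a different regime---sparse spanning subgraphs with $\Delta(F)\le\sqrt{n}/40$---and their proofs do not touch the Tur\'an-plus-$O(1)$ setting at all.

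As for your outline itself: the three-stage architecture (spectral lower bound $\Rightarrow$ stability near $T_{n,r}$ $\Rightarrow$ eigenvector uniformity $\Rightarrow$ local perturbation) is broadly the shape of the argument in \cite{WKX23}, so you are pointed in the right direction. However, what you have written is a strategy, not a proof, and you correctly identify the real difficulty without resolving it. The Kelmans-type switching in Step~3 does not by itself force edge-maximality: the implication ``every spectrum-increasing swap is blocked by a copy of $F$ $\Rightarrow$ $e(G)=\ex(n,F)$'' is exactly the nontrivial part, and your ``careful enumeration of blocked swaps'' and ``finite case analysis of $O(1)$-edge overlays'' are placeholders rather than arguments. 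In \cite{WKX23} this gap is closed by first proving a much stronger structural statement---that the spectral extremal graph contains $T_{n,r}$ as a subgraph---after which the remaining $O(1)$ edges are handled by a direct comparison of Rayleigh quotients between two graphs that share the same Tur\'an skeleton. Your sketch does not reach that structural conclusion; ``differs from $T_{n,r}$ in $o(n^2)$ edges'' is far weaker than ``contains $T_{n,r}$,'' and bridging that gap is where essentially all the work lies.
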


Conjecture \ref{Conj:1} was confirmed by Wang, Kang and Xue \cite{WKX23} in a stronger form.

\begin{theorem}[Wang, Kang, and Xue \cite{WKX23}]\label{Thm:WKX23}
Let $r\geq 2$ be an integer, and $F$ be a graph with $\ex (n, F) = e(T_{n,r}) + O(1)$.
For sufficiently large $n$, we have $\SPEX (n, F)\subseteq \EX (n, F)$.
\end{theorem}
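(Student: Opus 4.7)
The plan is to apply the standard three-stage spectral extremal strategy: lower bound the spectral radius, apply stability, and upgrade to exact structure via Perron-vector transformations. Take $G^* \in \SPEX(n, F)$ with Perron eigenvector $\mathbf{x}$ normalized so $\max_v x_v = 1$, and let $\lambda := \lambda(G^*)$. The Erd\H{o}s--Stone--Simonovits theorem applied to the hypothesis $\ex(n, F) = e(T_{n,r}) + O(1)$ forces $\chi(F) = r+1$, so $T_{n,r}$ itself is $F$-free, and therefore
\[
\lambda \;\geq\; \lambda(T_{n,r}) \;\geq\; \Bigl(1 - \frac{1}{r}\Bigr) n - 1.
\]
Combined with Hong's bound $\lambda^2 \leq 2e(G^*)$ and $F$-freeness, this pins $e(G^*)$ into a range that already contains $e(T_{n,r})$ up to lower-order terms.

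The second step is to invoke a Nikiforov-type spectral stability theorem: any $F$-free graph on $n$ vertices with $\lambda \geq (1 - 1/r - o(1))n$ admits an $r$-partition $V(G^*) = V_1 \cup \cdots \cup V_r$ with $\bigl||V_i| - n/r\bigr| = o(n)$, such that the total number of \emph{bad} edges (those inside some $V_i$, together with non-edges between distinct parts) is $o(n^2)$. Using the eigenvalue equation $\lambda x_v = \sum_{u \sim v} x_u$ and the fact that $\lambda = (1 - 1/r)n - O(1)$, I would iteratively sharpen this: every Perron entry is $1 - o(1)$ on a bulk set of linear size, the minimum degree satisfies $\delta(G^*) \geq (1 - 1/r - \varepsilon)n$, and all but $O(1)$ vertices have a Tur\'an-like neighborhood profile.

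The main obstacle, and the heart of the argument, is upgrading this approximate structure to the exact statement that $G^*$ is a complete $r$-partite graph with at most $O(1)$ extra edges inside parts. For a cross-part non-edge $uv$, I would use a Kelmans-type Perron-weighted transformation: if $x_u$ dominates the weight of some misplaced neighbor of $v$, reattaching that edge to $u$ strictly increases $\lambda$, unless it creates a copy of $F$. Because $\ex(n, F) - e(T_{n,r}) = O(1)$, the number of within-part edges compatible with $F$-freeness in any graph $o(n^2)$-close to $T_{n,r}$ is absolutely bounded, so only finitely many switches are blocked; after finitely many iterations $G^*$ becomes complete $r$-partite plus $O(1)$ edges inside parts. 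Finally, if $e(G^*) < \ex(n, F)$, adding a further within-part edge would preserve $F$-freeness and strictly increase $\lambda$, contradicting $G^* \in \SPEX(n, F)$; hence $e(G^*) = \ex(n, F)$ and $G^* \in \EX(n, F)$. The most delicate technical point is verifying that every intermediate graph produced by the switching procedure remains $F$-free, which requires a structural classification of $F$-free graphs within edit distance $O(1)$ of $T_{n,r}$ that saturate $\ex(n, F)$.
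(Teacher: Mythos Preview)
The paper does not prove this theorem. Theorem~\ref{Thm:WKX23} is quoted as a result of Wang, Kang, and Xue~\cite{WKX23} purely for context and motivation; no argument for it appears anywhere in the present paper, so there is nothing here to compare your proposal against.

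That said, your outline does follow the architecture of the actual Wang--Kang--Xue proof: a spectral lower bound via $T_{n,r}$, a Nikiforov-type spectral stability step, and then a Perron-vector-guided cleaning procedure. Two remarks on the sketch itself. First, the inequality $\lambda^2 \le 2e(G^*)$ is correct (it follows from $\sum_i \lambda_i^2 = 2m$) but is not ``Hong's bound''; Hong's bound is the inequality recorded in Lemma~\ref{lem:Hong-Nikiforov-bound}. Second, and more seriously, your last step---``if $e(G^*) < \ex(n,F)$, adding a further within-part edge would preserve $F$-freeness and strictly increase $\lambda$''---is not justified: an $F$-free graph with fewer than $\ex(n,F)$ edges can be edge-maximal $F$-free, so no single edge can necessarily be added. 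You correctly flag the genuinely hard point (maintaining $F$-freeness throughout the switching process) but do not resolve it; in the actual proof this is handled by first determining the precise structure of the graphs in $\EX(n,F)$ and then arguing that the spectral extremal graph must be one of them, rather than by a bare edge-addition argument.
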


Theorem \ref{Thm:WKX23} gives us more new lights on spectral extremal graph theory, that is, 
we can study spectral extremal problems only with information on Tur\'an numbers of the graph $F$.

Compared with small graphs, the positive evidences for Problem \ref{Prob:1} also include 
sparse spanning subgraphs and large cycles. In this direction, with the help of results
of Ore \cite{Ore1961} and Bondy \cite{Bondy1972} respectively, Fiedler and Nikiforov 
\cite{Fielder-Nikiforov2010} proved that $\SPEX (n,C_n)\subseteq \EX (n,C_n)=\{K_1\vee (K_1\cup K_{n-2})\}$ 
for $n\geq 5$. Let $\Gamma$ be the collection of $2$-connected claw-free non-Hamiltonian 
$n$-vertex graphs with minimum degree at least $k$, let $\EX_{2\text{-con}}(n,C_n; \delta\geq k)$ 
be the class of graphs which attain the maximum number of edges among $\Gamma$, and 
$\SPEX_{2\text{-con}}(n,C_n; \delta\geq k)$ be the class of graphs which attain the 
maximum spectral radius among $\Gamma$. Li and Ning \cite{LN16} proved that 
$\SPEX (n,C_n;\delta\geq k)\subseteq \EX (n,C_n;\delta\geq k)$ when $n=\Omega(k^2)$.
For claw-free graphs, Li, Ning and Peng \cite{LNP18} proved that 
$\SPEX_{2\text{-con}}(n,\{C_n,K_{1,3}\};\delta\geq k)\subseteq \EX_{2\text{-con}}(n,\{C_n,K_{1,3}\};\delta\geq k)$.
Ge and Ning \cite{GN20} proved that \eqref{eq:spex-subet-ex} holds for $C_{n-1}$, which 
was improved by Li and Ning \cite{LN23} to that \eqref{eq:spex-subet-ex} holds for 
$C_{\ell}$, where $\ell$ is any integer in $[n-c_1\sqrt{n},n]$. For more results on 
large cycles supporting Problem \ref{Prob:1}, we refer the reader to \cite{LN23}.

In this paper, motivated by the phenomenon on Hamiltonicity of graphs, we contribute 
to Problem \ref{Prob:1} by proving a positive result when $F$ is a sparse spanning graph.
Let $H_{n,k}$ be an $n$-vertex graph consisting of an $(n-1)$-clique together with an 
additional vertex that is connected only to $(k - 1)$ vertices of the clique, that is, 
$H_{n,k} = K_{k - 1} \vee (K_{n-k} \cup K_1)$. One of our main results is as follows.

\begin{theorem}\label{thm:spectral-version-Alon-result}
Let $F$ be any $n$-vertex graph with no isolated vertices, $\delta (F) = \delta$ 
and $\Delta (F) \leq \sqrt{n}/40$. For all sufficiently large $n$, if $G$ is an 
$n$-vertex $F$-free graph, then $\lambda (G)\leq\lambda (H_{n, \delta})$
with equality holds if and only if $G = H_{n, \delta}$.
\end{theorem}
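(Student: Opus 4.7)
The plan is to show that every spectral extremal $F$-free graph $G$ on $n$ vertices has very few non-edges, and that all of those non-edges must share a single common endpoint, which pins down $G=H_{n,\delta}$. Since $\delta(H_{n,\delta})=\delta-1<\delta(F)$, the graph $H_{n,\delta}$ itself is $F$-free (any embedding of $F$ into an $n$-vertex graph is a bijection sending each vertex to one of $G$-degree $\geq \delta$), so spectral extremality gives $\lambda(G)\geq\lambda(H_{n,\delta})$. For $\delta\geq 2$, $H_{n,\delta}\supsetneq K_{n-1}$ yields $\lambda(G)>n-2$, which forces $G$ to be connected (the only disconnected $n$-vertex graph with $\lambda\geq n-2$ is $K_{n-1}\cup K_{1}$); Hong's inequality $\lambda(G)^{2}\leq 2e(G)-n+1$ then gives $e(G)>\binom{n-1}{2}$, i.e., $G$ has at most $n-2$ non-edges. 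The edge case $\delta=1$ is handled directly: the Alon--Yuster theorem forces an isolated vertex in any $F$-free $G$, whence $\lambda(G)\leq n-2$ with equality iff $G=H_{n,1}$.

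The main step invokes the Dirac-type form of the Alon--Yuster theorem, which this result is designed to spectralize: under the hypothesis $\Delta(F)\leq\sqrt{n}/40$, every $n$-vertex graph $H$ with $\delta(H)\geq\delta(F)$ contains $F$ as a spanning subgraph. Applied to $G$, this gives $\delta(G)\leq\delta-1$; fix a minimum-degree vertex $v_{0}$ of $G$. Now comes the crucial one-edge swap. Suppose, for contradiction, that $\bar G$ contains a non-edge $uv$ with $u,v\neq v_{0}$, and set $G':=G+uv$. Because the added edge avoids $v_{0}$, we have $\deg_{G'}(v_{0})=\deg_{G}(v_{0})\leq\delta-1$, hence $\delta(G')\leq\delta-1$. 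Any copy of $F$ inside $G'$ would be a bijection $V(F)\to V(G')$ sending every $w$ to a vertex of $G'$-degree at least $\deg_{F}(w)\geq\delta$, contradicting $\delta(G')\leq\delta-1$; so $G'$ is still $F$-free. But $\lambda(G')>\lambda(G)$ by Perron--Frobenius, contradicting the choice of $G$.

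Therefore every non-edge of $G$ is incident to $v_{0}$, so $\bar G=K_{1,t}$ with $t=n-1-\deg_{G}(v_{0})$, meaning $G=H_{n,k}$ with $k=\deg_{G}(v_{0})+1\leq\delta$. Since $H_{n,k}$ is a proper subgraph of $H_{n,k+1}$ and both are connected for $k\geq 2$, Perron--Frobenius gives $\lambda(H_{n,k})<\lambda(H_{n,k+1})$; combined with $\lambda(G)\geq\lambda(H_{n,\delta})$ this forces $k=\delta$ and hence $G=H_{n,\delta}$. The main obstacle of the argument is the invocation of the Alon--Yuster Dirac-type embedding theorem for the class $\{F:\Delta(F)\leq\sqrt{n}/40\}$: it is precisely this input that guarantees adding the single edge $uv$ does not create a copy of $F$, thereby converting the one-edge increment into a legitimate $F$-free modification. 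Everything else---the edge-count bound via Hong, the identification of $v_{0}$, and the monotonicity of $\lambda(H_{n,k})$ in $k$---is essentially bookkeeping once that key embedding result is available.
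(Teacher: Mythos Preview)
Your argument hinges on the claim you call ``the Dirac-type form of the Alon--Yuster theorem'': that every $n$-vertex $H$ with $\delta(H)\geq\delta(F)$ contains $F$. This statement is false. Take $n$ even and large, let $F$ be a perfect matching (so $\delta(F)=\Delta(F)=1\leq\sqrt{n}/40$), and let $H=K_3\cup K_{n-3}$. Then $\delta(H)=2\geq 1$, yet $H$ has no perfect matching since both components have odd order. The Alon--Yuster theorem as stated in the paper is purely an edge-count result, $\ex(n,F)=\binom{n-1}{2}+\delta-1$; it does not give (and cannot give) a minimum-degree embedding criterion. The same issue invalidates your treatment of the $\delta=1$ case: being $F$-free does not force an isolated vertex.

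What the paper does instead is weaker but provable: from the Alon--Yuster edge bound together with the Hong--Shu--Fang/Nikiforov inequality one only gets $\delta(G)\leq 2(\delta-1)$ (Lemma~3.3). With merely $d(w)\leq 2\delta-2$, your one-edge addition $G+uv$ need not be $F$-free, since $d_{G+uv}(w)$ can still be $\geq\delta$. The paper therefore performs a genuinely different swap: it deletes $\delta(G)-\delta+1$ edges at $w$ \emph{and} adds one missing edge $uv$ away from $w$, producing a graph that is $F$-free for the trivial reason you identify. But now one must show this multi-edge deletion plus single-edge addition still \emph{increases} $\lambda$, which is exactly why the eigenvector estimates (Lemmas~3.4--3.7) are needed: they show $x_u x_v$ is of order $1/n$ while each removed term $x_w x_t$ is of order $n^{-3/2}$, so the Rayleigh quotient goes up. Your approach tries to bypass this eigenvector analysis, but the shortcut rests on a lemma that simply does not hold.
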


Our result can also be seen as a spectral analog of a theorem of Alon-Yuster \cite{Alon-Yuster2013}, 
whose proof is completely different from the extremal one.

\begin{theorem}[Alon-Yuster \cite{Alon-Yuster2013}]\label{thm:Alon-bound}
For all $n$ sufficiently large, if $F$ is any graph of order $n$ with no isolated vertices
and $\Delta (F) \leq \sqrt{n}/40$, then $ex(n,F) = \binom{n-1}{2}+\delta(F)-1$.
\end{theorem}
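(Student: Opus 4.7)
The plan has two parts. For the lower bound, set $\delta:=\delta(F)$ and note that $H_{n,\delta}=K_{\delta-1}\vee(K_{n-\delta}\cup K_1)$ contains a vertex of degree $\delta-1<\delta(F)$, so $F$ cannot appear as a spanning subgraph of $H_{n,\delta}$; a direct count gives $e(H_{n,\delta})=\binom{n-1}{2}+(\delta-1)$, which yields $\ex(n,F)\geq\binom{n-1}{2}+\delta-1$.

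For the matching upper bound, let $G$ be any $n$-vertex graph with $e(G)\geq\binom{n-1}{2}+\delta$; the task is to exhibit $F$ as a spanning subgraph of $G$. A one-line counting argument shows $\delta(G)\geq\delta$, since otherwise deleting a vertex of degree $<\delta$ would leave more than $\binom{n-1}{2}$ edges on $n-1$ vertices. Equivalently, the complement satisfies $e(\overline G)\leq n-\delta-1<n$. Finding $F\subseteq G$ is equivalent to packing $F$ edge-disjointly with $\overline G$ in $K_n$, i.e., producing a bijection $\phi:V(F)\to V(G)$ with $\phi(E(F))\cap E(\overline G)=\emptyset$.

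I split on $\Delta(\overline G)$. If $\Delta(\overline G)\leq 20\sqrt n$, then $\Delta(F)\cdot\Delta(\overline G)\leq(\sqrt n/40)(20\sqrt n)=n/2$, and the Sauer--Spencer packing theorem supplies $\phi$ directly. Otherwise, let $S=\{v\in V(G):\deg_{\overline G}(v)\geq 20\sqrt n\}$; then $2e(\overline G)<2n$ forces $|S|\leq\sqrt n/10$, and every $v\notin S$ has $\deg_G(v)>n-1-20\sqrt n$, so $G$ restricted to $V\setminus S$ is almost complete. The plan is: (i) greedily pick an independent set $U\subseteq V(F)$ with $|U|=|S|$ consisting of low-degree vertices, feasible because $\Delta(F)\leq\sqrt n/40$ so there are many candidates with small mutual interference; (ii) pre-embed $U\to S$ via a Hall-type matching, using that each $v\in S$ has many common neighbors in $G$ so that each small set $N_F(u)$ can be assigned distinct images in $N_G(v)$; (iii) extend the partial embedding to all of $V(F)$ by invoking Sauer--Spencer on the near-complete residual host $G[V\setminus S]$, with the boundary constraints from step (ii) absorbed by a short deficiency/augmenting-path argument.

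The main obstacle is the hard case, specifically step (iii): ensuring that the pre-embedding of $U\cup N_F(U)$ onto $S$ and its surroundings leaves a feasible packing instance on the rest. The numerical interplay $|S|\leq\sqrt n/10$ versus $\Delta(F)\leq\sqrt n/40$ is exactly what permits both the Hall argument on the boundary and the Sauer--Spencer application on the interior; in particular the hypothesis $\Delta(F)\leq\sqrt n/40$ appears twice, once to bound the exceptional set via the $e(\overline G)<n$ inequality and once to feed Sauer--Spencer, so the constant is essentially tight for this approach.
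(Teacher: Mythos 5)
The paper does not actually prove this statement: Theorem~\ref{thm:Alon-bound} is quoted from Alon and Yuster \cite{Alon-Yuster2013}, so there is no in-paper proof to compare against. Measured against the actual Alon--Yuster argument, your high-level architecture is the right one: the lower bound via $H_{n,\delta}$ is correct and complete; the reduction of the upper bound to packing $F$ edge-disjointly with $\overline{G}$, using $e(\overline{G})\leq n-\delta-1<n$ and $\delta(G)\geq\delta$, is exactly the standard reduction; and separating out the vertices of large $\overline{G}$-degree before running a Sauer--Spencer-type argument is in the spirit of their proof.

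There are, however, genuine gaps. A minor one: with the threshold $\Delta(\overline{G})\leq 20\sqrt{n}$ you only get $2\Delta(F)\Delta(\overline{G})\leq n$, with equality attainable, whereas Sauer--Spencer requires the strict inequality $2\Delta_1\Delta_2<n$; you must perturb the threshold or dispose of the boundary case separately. The serious gap is step (iii) of the hard case. ``Invoke Sauer--Spencer on the residual host, with the boundary constraints absorbed by a short deficiency/augmenting-path argument'' is not an argument: Sauer--Spencer packs two graphs with no prescribed partial bijection, and there is no off-the-shelf version that extends a given partial embedding while keeping the forced images inside the sets $N_G(v)$, $v\in S$, and simultaneously avoiding the up to $n-\delta-1$ remaining edges of $\overline{G}$ within $V\setminus S$; the switching moves in the Sauer--Spencer proof are precisely the ones that can displace your pre-embedded vertices. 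Controlling that interaction is where essentially all of the difficulty of the theorem lives, and Alon and Yuster resolve it with a bespoke switching and counting argument rather than a black-box citation. Relatedly, your Hall-type step (ii) needs the observation (true, but not stated or proved) that since every vertex of $F$ has degree at most $\sqrt{n}/40$, the constraint $d_F(u)\leq d_G(v)$ only bites for vertices $v$ with $d_G(v)<\sqrt{n}/40$, of which $e(\overline{G})<n$ permits at most one; without pinning this down the matching claim is unsupported. As written, the upper bound is a plausible plan, not a proof.
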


An immediate corollary of Theorem \ref{thm:spectral-version-Alon-result} and Theorem \ref{thm:Alon-bound} 
directly contributes to Problem \ref{Prob:1} positively.

\begin{corollary}
Let $F$ be any graph of order $n$ with no isolated vertices and $\Delta (F) \leq \sqrt{n}/40$.
Then $\SPEX (n, F)\subseteq \EX (n, F)$ holds for all sufficiently large $n$.
\end{corollary}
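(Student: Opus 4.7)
My plan is to let $G \in \SPEX(n,F)$ with spectral radius $\lambda = \lambda(G)$ and Perron eigenvector $\mathbf{x}$ normalized so that $\max_v x_v = 1$, and argue that $G$ must equal $H_{n,\delta}$. First I verify that $H_{n,\delta}$ is itself $F$-free: since $F$ spans $n$ vertices, any embedding $F \hookrightarrow H_{n,\delta}$ would be a bijection, and the pendant vertex of $H_{n,\delta}$ has degree $\delta-1 < \delta(F)$, so no vertex of $F$ can be mapped onto it. Hence $\lambda \geq \lambda(H_{n,\delta})$, and a short quotient-matrix computation on the equitable tripartition of $H_{n,\delta}$ (apex clique, inner clique, pendant) yields $\lambda(H_{n,\delta}) > n-2$ with an explicit lower bound that I can reuse later as a quantitative target.

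Next, by Theorem~\ref{thm:Alon-bound} we have $e(G) \leq \binom{n-1}{2} + \delta - 1$. Combining this edge bound with Hong's inequality $\lambda^2 \leq 2e(G) - n + 1$ (applied to the spectrally dominant component, which must have at least $n-1$ vertices since $\lambda \geq n-2$) traps $\lambda$ in a narrow window around $n-2$. Feeding this back into the eigenvalue equation $\lambda x_v = \sum_{w \sim v} x_w$ forces most entries of $\mathbf{x}$ to be close to $1$. I would then fix a small threshold $\varepsilon$ and define $W = \{v : x_v \geq 1 - \varepsilon\}$. A standard edge-swapping argument---delete an edge incident to a low-weight vertex, add an edge between two high-weight non-adjacent vertices, and check via the identity $\mathbf{x}^\top A(G')\mathbf{x} - \mathbf{x}^\top A(G)\mathbf{x} = 2(x_a x_b - x_c x_d) > 0$ that $\lambda$ strictly increases while $F$-freeness is preserved---drives the structure so that $W$ spans a clique of size exactly $n-1$, and hence $G$ contains $K_{n-1}$ as an induced subgraph with a single exceptional vertex $w$ outside.

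It remains to pin down the degree $k = d_G(w)$. If $k \geq \delta$, then $e(G) \geq \binom{n-1}{2} + \delta$ exceeds the Alon--Yuster bound, and an Alon--Yuster-style greedy embedding (using the hypothesis $\Delta(F) \leq \sqrt{n}/40$ in a crucial way, to make room for the low-degree vertices of $F$ around $w$) produces a copy of $F$ in $G$, a contradiction. Conversely, if $k < \delta - 1$, adding a single edge at $w$ strictly increases $\lambda$ and leaves the resulting graph a subgraph of $H_{n,\delta}$, hence still $F$-free, contradicting spectral extremality. Thus $k = \delta - 1$ and $G = H_{n,\delta}$. I expect the main obstacle to be the quantitative spectral bookkeeping that promotes \emph{near-extremality} to $|W| = n-1$ \emph{exactly}: the threshold $\varepsilon$ and the Hong-type estimate must be calibrated together so that the swapping step closes without leaving intermediate cases in which several vertices carry moderate eigenvector weight simultaneously, and the Alon--Yuster embedding at the last step must be executed carefully enough that the $\sqrt{n}/40$ maximum-degree hypothesis is fully exploited.
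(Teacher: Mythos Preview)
Your proposal is essentially a re-proof of Theorem~\ref{thm:spectral-version-Alon-result}, from which the paper derives the corollary in one line: once $\SPEX(n,F)=\{H_{n,\delta}\}$ is known, the Alon--Yuster equality $e(H_{n,\delta})=\binom{n-1}{2}+\delta-1=\ex(n,F)$ gives $H_{n,\delta}\in\EX(n,F)$. Your outline of that theorem matches the paper's Section~3 closely---$\lambda\geq n-2$ via $H_{n,\delta}$, edge bound from Alon--Yuster, eigenvector estimates, an edge-swap to force $G\setminus\{w\}=K_{n-1}$, and the final pinning $d(w)=\delta-1$.

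The one methodological difference worth noting concerns exactly the step you flag as the main obstacle. Rather than invoking Hong's inequality $\lambda^2\le 2e-n+1$ and then thresholding eigenvector entries to isolate a single low-weight vertex, the paper uses the sharper Hong--Shu--Fang/Nikiforov bound (Lemma~\ref{lem:Hong-Nikiforov-bound}) to get $m\ge\binom{n-1}{2}+\delta(G)/2$, which combined with the Alon--Yuster upper bound yields $\delta(G)\le 2(\delta(F)-1)$ directly (Lemma~\ref{lem:upper-lower-bound-minimum-degree}). A short degree-sum count (Lemma~\ref{lem:large-degree}) then shows every other vertex has degree at least $n-2-\delta(G)$, so there is \emph{a priori} exactly one exceptional vertex $w$, and the delicate ``promote $|W|$ from near $n-1$ to exactly $n-1$'' bookkeeping never arises. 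This makes the eigenvector and swap steps (Lemmas~\ref{lem:xmax-upper-bound}--\ref{lem:induced-subgraph}) straightforward.
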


We also prove a $Q$-version of Theorem \ref{thm:spectral-version-Alon-result}, which requires 
a more involved proof including the use of the double eigenvectors technique. It is worth 
noting that this powerful technique can be traced back at least to Rowlinson \cite{Rowlinson1988}, 
and has been further developed in subsequent works, such 
as \cite{Chen-Wang-Zhai2022,Fan-Lin-Lu2022,Lou-Zhai2021,Zhai-Lin2022,Zhai-Xue-Lou2020}. 

\begin{theorem}\label{thm:q-spectral-version-Alon-result}
Let $F$ be any $n$-vertex graph with no isolated vertices, $\delta (F) = \delta$ and 
$\Delta (F) \leq \sqrt{n}/40$. For all sufficiently large $n$, if $G$ is an $n$-vertex 
$F$-free graph then $q (G)\leq q(H_{n, \delta})$, with equality holds if and only if 
$G = H_{n, \delta}$.
\end{theorem}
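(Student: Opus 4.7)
The plan is to parallel the strategy used to prove the adjacency version (Theorem \ref{thm:spectral-version-Alon-result}), substituting the signless Laplacian $Q(G) = D(G)+A(G)$ for the adjacency matrix at each step. By Theorem \ref{thm:Alon-bound}, $H_{n,\delta}$ attains $\ex(n,F) = \binom{n-1}{2}+\delta-1$, hence is $F$-free; so any extremal $F$-free graph $G$ for the signless Laplacian spectral radius satisfies $q(G) \geq q(H_{n,\delta})$.

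The preparatory step is to pin down a sharp lower bound for $q(H_{n,\delta})$. Using the equitable partition of $H_{n,\delta}$ into the apex set of size $\delta-1$, the clique $K_{n-\delta}$, and the pendant-like vertex, $q(H_{n,\delta})$ is the largest eigenvalue of a $3\times 3$ quotient matrix, from which I would extract an estimate of the form $q(H_{n,\delta}) \geq 2n - C_{\delta}$ for a constant $C_\delta$ depending only on $\delta$. Combined with the Merris-type inequality $q(G) \leq \max_{uv\in E(G)}(d_u+d_v)$, this immediately forces the existence of an edge $uv$ in $G$ with $d_u+d_v \geq 2n-C_\delta$, so both endpoints have very high degree.

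Next, let $\mathbf{x}$ be the Perron eigenvector of $Q(G)$ normalized by $\mathbf{x}_{u^*} = \max_v \mathbf{x}_v = 1$. From the eigen-equation $q(G)\mathbf{x}_{u^*} = d_{u^*} + \sum_{v \sim u^*} \mathbf{x}_v$ combined with $q(G) \geq 2n-C_\delta$, I would derive $d_{u^*} \geq n-1-o(1)$, and then a cross-counting argument at vertices with small eigenvector entries would upgrade this to $d_{u^*} = n-1$, so $u^*$ is dominating. A complementary Rayleigh-quotient estimate then forces $G - u^*$ to be $K_{n-1}$ with at most $O(1)$ missing edges, placing $G$ within a bounded Hamming distance of $H_{n,\delta}$.

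The main obstacle is the final structural step: converting ``$G$ is close to $H_{n,\delta}$'' into the exact identity $G = H_{n,\delta}$. Here the double eigenvectors technique (cf.\ \cite{Chen-Wang-Zhai2022,Zhai-Lin2022,Zhai-Xue-Lou2020}) is indispensable. I would work simultaneously with the Perron eigenvectors of $Q(G)$ and of $Q(H_{n,\delta})$, perform local edge-swaps between the pendant-like vertex and clique vertices, and argue that any deviation from $H_{n,\delta}$ either strictly decreases $q$ or creates a copy of $F$ (invoking $\Delta(F) \leq \sqrt{n}/40$ and the absence of isolated vertices to guarantee an embedding). Unlike the adjacency case, each edge-swap in $Q$ alters two diagonal entries \emph{and} two off-diagonal entries at once, so monotonicity of the Rayleigh quotient under switching is delicate; the key is to weight the two eigenvectors so that the sign of the perturbation is controlled, which is the technical heart of the argument.
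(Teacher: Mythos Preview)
Your proposal has a concrete gap: the claim that the vertex $u^*$ of maximum eigenvector entry can be upgraded to a dominating vertex ($d_{u^*}=n-1$) is false in general. When $\delta(F)=1$ the extremal graph is $H_{n,1}=K_{n-1}\cup K_1$, whose maximum degree is $n-2$; no vertex is dominating, so your ``cross-counting upgrade'' cannot succeed. More broadly, the strategy of locating a top vertex and peeling it off is the wrong orientation here. The paper works from the \emph{bottom}: it singles out the vertex $w$ of \emph{minimum} eigenvector entry, shows $d(w)$ is bounded by a constant, and then proves $G-w=K_{n-1}$.

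Two further points where your outline diverges from what is actually needed. First, the Merris bound $q(G)\le\max_{uv}(d_u+d_v)$ only locates one heavy edge; the paper instead uses the Feng--Yu inequality $q(G)\le \tfrac{2m}{n-1}+n-2$ together with $q(G)\ge 2(n-2)+\tfrac{\delta-1}{n-1}$ to obtain the global edge count $m\ge\binom{n-1}{2}+\tfrac{\delta-1}{2}$, which is what drives the structural lemmas. Second, the essential new ingredient for the $Q$-version (absent from your sketch) is the relation $x_v=\dfrac{c_v}{(2-c_v)\sqrt{n}}+o(n^{-1/2})$ with $c_v=d(v)/n$; unlike the adjacency case, low-degree vertices automatically have small $Q$-entries, and this lemma quantifies that. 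The double eigenvectors technique is then deployed \emph{before} the final step---to prove $d(w)<\delta(F)+O(1)$ by comparing $Q(G)$ with $Q(H_{n,\delta})$---while the last structural lemma ($G-w=K_{n-1}$) follows from a single Rayleigh-quotient edge swap, not the other way around.
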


\section{Preliminaries}

In this section we introduce definitions and notation that will be used throughout the 
paper, and record several preparatory lemmas.

\subsection{Definitions and Notation}

Given a graph $G$ of order $n$, the \emph{adjacency matrix} $A(G)$ of $G$ is an $n$-by-$n$ 
matrix whose rows and columns are indexed by the vertices in $V(G)$. The $(i,j)$-entry of 
$A(G)$ is equal to $1$ if the vertices $i$ and $j$ are adjacent, and $0$ otherwise. Therefore, 
$A(G)$ is a real and symmetric matrix, it has $n$ real eigenvalues which we will denote by
$\lambda_1(G)\geq \lambda_2(G)\geq\cdots \geq \lambda_n(G)$. Let us recall that the signless 
Laplacian matrix of $G$ is defined as $Q(G) := D(G) + A(G)$, where $D(G)$ is the diagonal 
matrix whose entries are the degrees of the vertices of $G$. We shall write 
$q_1(G)\geq q_2(G)\geq\cdots\geq q_n(G)$ for the eigenvalues of $Q(G)$. We also write 
$\lambda(G):=\lambda_1(G)$ and $q(G) := q_1(G)$ for short. The Perron--Frobenius theorem 
for nonnegative matrices implies that $A(G)$ (resp. $Q(G)$) of a connected graph $G$ has 
a unique positive eigenvector of unit length corresponding to $\lambda(G)$ (resp. $q(G)$), 
and this eigenvector is called the \emph{Perron vector} of $A(G)$ (resp. $Q(G)$). 

Given a subset $X$ of the vertex set $V(G)$ of a graph $G$, we will let $G[X]$ be the 
subgraph of $G$ induced by $X$, and denote by $e(X)$ the number of edges in $G[X]$. 
We also use $e(G)$ to denote the number of edges of $G$. As usual, for a vertex $v$ 
of $G$ we write $d_G(v)$ and $N_G(v)$ for the degree of $v$ and the set of neighbors 
of $v$ in $G$, respectively. If the underlying graph $G$ is clear from the context, 
simply $d(v)$ and $N(v)$. We use the notations $\delta(G)$ and $\Delta(G)$ to represent, 
respectively, the minimum degree and maximum degree of $G$.

For a graph $G$, we denote the clique number of $G$ as $\omega(G)$, which represents 
the number of vertices in the largest complete subgraph of $G$. When considering two 
vertex-disjoint graphs, $G$ and $H$, we use $G \vee H$ to denote their \emph{join}, 
which is obtained by adding all possible edges between $G$ and $H$. The \emph{$k$-th power} 
of a graph $G$, denoted by $G^k$, is a graph with vertex set $V (G)$ in which two vertices 
are adjacent if and only if their distance is at most $k$ in $G$. For graph notation 
and terminology undefined here, we refer the reader to \cite{Bondy-Murty2008}.

\subsection{Basic lemmas}

We will use the following upper bound on $\lambda (G)$, which was proved by Hong, Shu,
and Fang \cite{HongShuFang2001} for connected graphs. Nikiforov \cite{Nikiforov2002} 
proved it for general graphs independently.

\begin{lemma}[\cite{HongShuFang2001,Nikiforov2002}]\label{lem:Hong-Nikiforov-bound}
Let $G$ be an $n$-vertex graph with $m$ edges. Then
\[
\lambda(G) \leq \frac{\delta(G) - 1}{2} + \sqrt{2m - \delta(G) n + \frac{(\delta(G) + 1)^2}{4}}.
\]
\end{lemma}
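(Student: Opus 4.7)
The plan is to prove the algebraically equivalent product-form inequality
\[
(\lambda(G) - \delta)(\lambda(G) + 1) \;\leq\; 2m - \delta n,
\]
where $\delta := \delta(G)$. Since $\lambda(G) \geq \delta \geq (\delta-1)/2$ (the first inequality being the standard average-degree lower bound), completing the square in this inequality and taking the positive root recovers the stated bound on $\lambda(G)$. Note also that $2m \geq \delta n$, so the quantity under the square root in the statement is nonnegative.

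Next I would pick a connected component $G_0$ of $G$ with $\lambda(G_0) = \lambda(G)$, and let $\mathbf{x}$ be the positive Perron eigenvector of $G_0$ for $\lambda := \lambda(G)$, extended by zeros to all of $V(G)$. Then $A(G)\mathbf{x} = \lambda \mathbf{x}$, i.e.\ $\sum_{u \sim v} x_u = \lambda x_v$ for every $v \in V(G)$ (both sides vanish when $v \notin V(G_0)$). Summing this identity over all $v$ and subtracting $\delta\sum_v x_v$ from both sides gives
\[
(\lambda - \delta) \sum_v x_v \;=\; \sum_u (d(u) - \delta)\, x_u.
\]

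Then I would multiply both sides by $\lambda + 1$ and apply the identity $(\lambda + 1)x_u = x_u + \sum_{w \sim u} x_w = \sum_{w \in N[u]} x_w$. Swapping the order of summation on the right and using that $w \in N[u] \iff u \in N[w]$ converts this to
\[
(\lambda - \delta)(\lambda + 1) \sum_v x_v \;=\; \sum_w x_w \sum_{u \in N[w]} (d(u) - \delta).
\]
The key step is the pointwise estimate $\sum_{u \in N[w]}(d(u) - \delta) \leq \sum_{u \in V(G)}(d(u) - \delta) = 2m - \delta n$, which holds because every summand $d(u) - \delta$ is nonnegative. Plugging this in and cancelling the strictly positive factor $\sum_v x_v > 0$ yields the product-form inequality, completing the proof.

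The whole argument is essentially a two-step manipulation of the Perron eigenvector equation, so there is no serious obstacle; the main point to handle carefully is the disconnected case, which is absorbed above by taking $\mathbf{x}$ supported on a single component realizing $\lambda(G)$ and using that the \emph{global} minimum degree $\delta(G)$ still lower-bounds the degree of every vertex. As a sanity check, equality holds throughout when $G = K_{1,n-1}$ (with $\delta = 1$, $\lambda = \sqrt{n-1}$), and for $K_n$ and $K_{r,r}$, which is consistent with the bound.
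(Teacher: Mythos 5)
Your proof is correct: the reduction to the product form $(\lambda-\delta)(\lambda+1)\le 2m-\delta n$, the restriction to a spectral component with the zero-extended Perron vector, the summation swap over closed neighborhoods, and the pointwise bound $\sum_{u\in N[w]}(d(u)-\delta)\le 2m-\delta n$ are all sound, and the recovery of the stated bound via completing the square is justified since $\lambda\ge 2m/n\ge\delta\ge(\delta-1)/2$. The paper itself gives no proof of this lemma (it is quoted from Hong--Shu--Fang and Nikiforov), and your argument is essentially Nikiforov's original one, so there is nothing to fix.
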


Let $G$ be a graph on $n$ vertices and $m$ edges. For any vector $(z_1,z_2,\ldots,z_n)$ 
with $z_i\geq 0$ and $\sum_{i=1}^n z_i = 1$, the well-known Motzkin--Straus 
inequality \cite{Motzkin-Straus1965} states that
\begin{equation}\label{eq:Motzkin-Straus-inequality}
2 \sum_{ij\in E(G)} z_iz_j \leq 1 - \frac{1}{\omega (G)}.
\end{equation}
Now, let $\bm{x} = [x_i]$ be a nonnegative vector of unit length, and set
\[
\bm{y} := \frac{\bm{x}}{\|\bm{x}\|_1}.
\]
Obviously, $\|\bm{y}\|_1 = 1$. In light of \eqref{eq:Motzkin-Straus-inequality}, we find
\begin{equation}\label{eq:clique-vector}
1 - \frac{1}{\omega(G)} \geq 2\sum_{ij\in E(G)} y_iy_j 
= \frac{2}{\|\bm{x}\|_1^2} \cdot \sum_{ij\in E(G)} x_ix_j.
\end{equation}
Hence, we immediately obtain the following result, established by Wilf.

\begin{lemma}[\cite{Wilf1986}]\label{lem:Wilf-bound}
Let $\bm{x}$ be the Perron vector of $A(G)$. Then
\[
\lambda (G) \leq \|\bm{x}\|_1^2 \cdot \Big(1 - \frac{1}{\omega (G)}\Big).
\]
\end{lemma}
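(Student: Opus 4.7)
The plan is to deduce the inequality by combining two ingredients already in place: the Rayleigh-quotient expression for $\lambda(G)$ in terms of the Perron vector, and the scaled Motzkin--Straus bound \eqref{eq:clique-vector} that has just been derived. Almost all the work has already been done in the text preceding the statement; the lemma is essentially a repackaging of that computation.

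First, I would use that $\bm{x}$ is the unit-length Perron vector of $A(G)$, so $A(G)\bm{x} = \lambda(G)\bm{x}$ and $\bm{x}^{\top}\bm{x} = 1$. Because $A(G)$ is a symmetric $0/1$-matrix with zero diagonal, this yields the identity
\[
\lambda(G) \;=\; \bm{x}^{\top} A(G)\, \bm{x} \;=\; 2 \sum_{ij \in E(G)} x_i x_j,
\]
which expresses $\lambda(G)$ as an edge-weighted sum of products of Perron-vector entries.

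Next, I would invoke inequality \eqref{eq:clique-vector}, obtained by applying the Motzkin--Straus inequality \eqref{eq:Motzkin-Straus-inequality} to the probability vector $\bm{y} = \bm{x}/\|\bm{x}\|_1$, and rewrite it in the equivalent form
\[
2 \sum_{ij \in E(G)} x_i x_j \;\leq\; \|\bm{x}\|_1^{\,2}\Big(1 - \frac{1}{\omega(G)}\Big).
\]
Substituting the Rayleigh-quotient identity into the left-hand side immediately gives the claimed bound $\lambda(G) \leq \|\bm{x}\|_1^{\,2}(1 - 1/\omega(G))$.

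There is really no obstacle in this argument, since the genuine combinatorial content (Motzkin--Straus) and the rescaling that converts a probability-simplex bound into a bound involving $\|\bm{x}\|_1^{\,2}$ have both already been carried out in the excerpt. The only mild point to verify is that $\bm{y}$ is well-defined, that is, $\|\bm{x}\|_1 > 0$; this follows from Perron--Frobenius applied to each nontrivial connected component of $G$, while isolated vertices contribute $0$ to both sides and can be ignored.
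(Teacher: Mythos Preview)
Your proposal is correct and follows exactly the approach in the paper: the text preceding the lemma already derives \eqref{eq:clique-vector} from Motzkin--Straus applied to $\bm{y}=\bm{x}/\|\bm{x}\|_1$, and you correctly supply the one implicit step, namely $\lambda(G)=\bm{x}^{\top}A(G)\bm{x}=2\sum_{ij\in E(G)}x_ix_j$ for the unit Perron vector. There is nothing to add.
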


The next lemma gives us a pithy bound on the largest eigenvalue of $Q(G)$.

\begin{lemma}[\cite{FengYu2009}]\label{lem:q1-upper-bound}
Let $G$ be a graph with $n$ vertices and $m$ edges. Then 
\[
q(G) \leq \frac{2m}{n-1} + n-2.
\]
\end{lemma}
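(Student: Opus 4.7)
The plan is to reduce the target bound to a Cauchy--Schwarz estimate for the Perron eigenvector, via the complementary-graph identity $Q(G) + Q(\bar{G}) = (n-2)I + J$, which follows from $D(G) + D(\bar{G}) = (n-1)I$ and $A(G) + A(\bar{G}) = J - I$.

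Let $\bm{x}$ be a unit Perron eigenvector of $Q(G)$ and set $S := \bm{1}^{\top}\bm{x}$. From the identity above,
\[
q(G) = \bm{x}^{\top} Q(G) \bm{x} = (n-2)\|\bm{x}\|^{2} + (\bm{1}^{\top}\bm{x})^{2} - \bm{x}^{\top} Q(\bar{G}) \bm{x} = (n-2) + S^{2} - \bm{x}^{\top} Q(\bar{G}) \bm{x}.
\]
Thus the desired inequality is equivalent to $S^{2} - \bm{x}^{\top} Q(\bar{G}) \bm{x} \leq 2m/(n-1)$, and it reduces to producing a strong enough lower bound on the nonnegative quadratic form $\bm{x}^{\top} Q(\bar{G}) \bm{x}$.

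For this lower bound I would apply Cauchy--Schwarz to the sum-of-squares expansion of $Q(\bar{G})$:
\[
\bm{x}^{\top} Q(\bar{G}) \bm{x} = \sum_{ij \in E(\bar{G})}(x_i + x_j)^{2} \;\geq\; \frac{\bigl(\sum_{ij \in E(\bar{G})} (x_i + x_j)\bigr)^{2}}{|E(\bar{G})|} \;=\; \frac{\bigl(\sum_{v} \bar{d}(v)\, x_v\bigr)^{2}}{|E(\bar{G})|}.
\]
The eigenvalue equation $Q(G)\bm{x} = q(G)\bm{x}$, pre-multiplied by $\bm{1}^{\top}$, gives $q(G)S = 2\sum_v d(v) x_v$; hence $\sum_v \bar{d}(v) x_v = ((n-1) - q(G)/2)\, S$, so
\[
\bm{x}^{\top} Q(\bar{G}) \bm{x} \;\geq\; \frac{S^{2}\,((n-1) - q(G)/2)^{2}}{|E(\bar{G})|}.
\]
Combining with the earlier display, substituting $|E(\bar{G})| = \binom{n}{2} - m$, and finally invoking the trivial Cauchy--Schwarz estimate $S^{2} \leq n\,\|\bm{x}\|^{2} = n$ yields a quadratic inequality in $q(G)$ that rearranges to $q(G) \leq 2m/(n-1) + (n-2)$. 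The case $((n-1) - q(G)/2)^{2} \geq |E(\bar{G})|$ must be handled separately, but there the bound is even easier since it forces $q(G) \leq n-2$.

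The main obstacle is the algebraic rearrangement: because $S^{2} \leq n$ alone gives only the trivial estimate $q(G) \leq q(K_n) = 2(n-1)$, one must carry the Cauchy--Schwarz lower bound on $\bm{x}^{\top}Q(\bar{G})\bm{x}$ through the calculation so that the parameter $m$ enters with the correct coefficient $2/(n-1)$. The equality case is a useful sanity check: both Cauchy--Schwarz steps are simultaneously tight only when $\bm{x} = \bm{1}/\sqrt{n}$ and $Q(\bar{G})\bm{x} = 0$, which forces $\bar{G}$ to be edgeless, i.e., $G = K_n$, where $2m/(n-1) + (n-2) = n + (n-2) = 2(n-1) = q(K_n)$ as required.
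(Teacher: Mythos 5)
The paper itself gives no proof of this lemma---it is quoted from Feng and Yu---so your argument has to stand on its own, and it does not: the final rearrangement fails. Everything up to
\[
q(G)\;\le\;(n-2)+S^{2}\Bigl(1-\frac{((n-1)-q(G)/2)^{2}}{|E(\bar{G})|}\Bigr)
\]
is correct, but after substituting $S^{2}\le n$ the resulting inequality does not rearrange to $q(G)\le 2m/(n-1)+n-2$. Indeed, put $v:=2(n-1)-q(G)\ge 0$ and $M:=\binom{n}{2}-m$; since $(n-1)-q(G)/2=v/2$, your final inequality reads $v\ge \frac{n v^{2}}{4M}$, i.e.\ $v\le \frac{4M}{n}$, which unwinds to $q(G)\ge \frac{4m}{n}$. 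That is the standard \emph{lower} bound on $q(G)$, not an upper bound. The structural reason is that your Cauchy--Schwarz lower bound on $\bm{x}^{\mathrm{T}}Q(\bar{G})\bm{x}$ is proportional to $((n-1)-q(G)/2)^{2}$, which shrinks as $q(G)$ grows toward $2(n-1)$: the larger $q(G)$ is, the weaker the subtracted term, so the inequality is satisfied by every $q(G)$ in an interval whose right endpoint is $2(n-1)$ and cannot exclude values above the target.

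Two concrete symptoms confirm this. For the star $K_{1,n-1}$ one has $q=n$ and $S^{2}=4(n-1)/n$, and your chain (after replacing $S^{2}$ by $n$) yields only $q\le (n-2)+\frac{n^{2}}{2(n-1)}\approx \frac{3n}{2}$, far above both the true value and the target $\frac{2m}{n-1}+n-2=n$. Moreover, the star attains equality in the lemma, so in any valid proof of this form every intermediate inequality would have to be tight for it; your equality analysis concludes that equality forces $G=K_{n}$ only, which is already a signal that the chain cannot close. The complement identity and the exact relation $\sum_{v}\bar{d}(v)x_{v}=((n-1)-q(G)/2)S$ are fine and could be kept, but to make the strategy work you would need a lower bound on $\bm{x}^{\mathrm{T}}Q(\bar{G})\bm{x}$ that does not degenerate as $q(G)\to 2(n-1)$; as it stands, the key step is missing.
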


Finally, we also need the following double eigenvectors technique for signless 
Laplacian matrices of graphs. 

\begin{lemma}[\cite{Zhai-Xue-Lou2020}]\label{lem:double-eigenvectors}
Let $G$ and $H$ be two graphs with $|V(G)| = |V(H)|$. Let $\bm{x}$ and $\bm{y}$ be 
the Perron vectors of $Q(G)$ and $Q(H)$, respectively. Then
\[
\bm{x}^{\mathrm{T}} Q(G) \bm{y} = \sum_{ij\in E(G)} (x_i + x_j) (y_i + y_j),
\]
and
\[
\bm{x}^{\mathrm{T}} \bm{y} (q(H) - q(G)) = \bm{x}^{\mathrm{T}} (Q(H) - Q(G)) \bm{y}.
\]
\end{lemma}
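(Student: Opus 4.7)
The plan is to verify each of the two identities by a short direct manipulation starting from the definition $Q(G)=D(G)+A(G)$ together with the Perron eigen-equations; no graph-theoretic input beyond degree/edge bookkeeping is needed.

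For the first identity, I would split $\bm{x}^{\mathrm{T}} Q(G) \bm{y}$ as $\bm{x}^{\mathrm{T}} D(G) \bm{y} + \bm{x}^{\mathrm{T}} A(G) \bm{y}$ and convert each vertex-indexed sum into an edge-indexed one. The diagonal part yields $\sum_{i\in V(G)} d_G(i)\, x_i y_i$, which by the standard handshake bookkeeping (each edge $ij$ contributes once to $d_G(i)$ and once to $d_G(j)$) regroups as $\sum_{ij\in E(G)} (x_i y_i + x_j y_j)$. The adjacency part expands as $\sum_i x_i \sum_{j\sim i} y_j$, and since each edge $ij$ is traversed once as the ordered pair $(i,j)$ and once as $(j,i)$, this equals $\sum_{ij\in E(G)} (x_i y_j + x_j y_i)$. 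Adding the two contributions and recognising the resulting expression as $\sum_{ij\in E(G)} (x_i+x_j)(y_i+y_j)$ finishes the first identity.

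For the second identity, I would use symmetry of $Q(G)$ and $Q(H)$ together with $Q(G)\bm{x}=q(G)\bm{x}$ and $Q(H)\bm{y}=q(H)\bm{y}$. Taking the transpose of the first equation gives $\bm{x}^{\mathrm{T}} Q(G) = q(G)\, \bm{x}^{\mathrm{T}}$, hence $\bm{x}^{\mathrm{T}} Q(G) \bm{y} = q(G)\, \bm{x}^{\mathrm{T}} \bm{y}$; similarly $\bm{x}^{\mathrm{T}} Q(H) \bm{y} = q(H)\, \bm{x}^{\mathrm{T}} \bm{y}$. Subtracting these two scalar equalities immediately yields $\bm{x}^{\mathrm{T}}\bm{y}\,(q(H)-q(G)) = \bm{x}^{\mathrm{T}}(Q(H)-Q(G))\bm{y}$, which is the desired relation.

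There is essentially no obstacle; the only mild point is the passage from sums over vertices (weighted by degrees or by neighbour sets) to sums over unordered edges, which is a routine double-counting step. It is worth noting for later applications that the argument never uses positivity of $\bm{x}$ or $\bm{y}$: any eigenvectors of $Q(G)$ and $Q(H)$ for the eigenvalues $q(G)$ and $q(H)$ would suffice. The Perron property matters only when one subsequently wants to ensure $\bm{x}^{\mathrm{T}}\bm{y}>0$ so as to extract a sign from the second identity.
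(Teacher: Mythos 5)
Your proof is correct. The paper does not prove this lemma itself --- it is quoted from Zhai--Xue--Lou \cite{Zhai-Xue-Lou2020} --- and your derivation (splitting $Q=D+A$ and double-counting over edges for the first identity, then using symmetry of $Q$ and the two eigen-equations for the second) is exactly the standard argument from that source; your closing observation that positivity of the Perron vectors is only needed later, to guarantee $\bm{x}^{\mathrm{T}}\bm{y}>0$, is also accurate and is indeed how the lemma is used in Lemma \ref{lem:small-degree-w}.
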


\section{Proof of Theorem \ref{thm:spectral-version-Alon-result}}

The aim of this section is to give a proof of Theorem \ref{thm:spectral-version-Alon-result}.
Assume that $F$ is an $n$-vertex graph with no isolated vertices and $\Delta (F) \leq \sqrt{n}/40$.
Let $G$ be a graph with maximum spectral radius among all $n$-vertex graphs which contain no 
copy of $F$ as a spanning subgraph, and $\bm{x}$ be the Perron vector of $A(G)$. With this 
notation, for any $v \in V(G)$, the eigenvalue equation with respect to $v$ becomes
\[
\lambda (G) x_v = \sum_{u\in N(v)} x_u.
\]
Throughout this section, we set $m:= |E(G)|$, $\delta (F):= \delta$ and 
$x_{\max}:= \max\{x_u: u\in V(G)\}$ for short.

We commence with a simple lemma that, while not optimal, adequately fulfills our requirements.

\begin{lemma}\label{lem:lower-bound-lambda}
$\lambda (G) \geq n - 2$.
\end{lemma}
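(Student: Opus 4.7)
The plan is to exhibit a concrete $n$-vertex $F$-free graph whose spectral radius is at least $n-2$, and then invoke the extremality of $G$. The natural candidate is $K_{n-1}\cup K_1$. Since this graph has $n$ vertices but contains an isolated vertex, any spanning subgraph on the full vertex set must use that isolated vertex; but $F$ has no isolated vertices, so $F$ cannot be a spanning subgraph of $K_{n-1}\cup K_1$. In particular, $K_{n-1}\cup K_1$ lies in the competition that defines $G$.

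Next I would compute (or simply recall) that $\lambda(K_{n-1}\cup K_1)=\lambda(K_{n-1})=n-2$. Since $G$ maximizes the spectral radius among $n$-vertex $F$-free graphs, we conclude $\lambda(G)\geq \lambda(K_{n-1}\cup K_1)=n-2$, which is exactly what the lemma asserts.

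There is essentially no obstacle here: the only thing to verify is that $F$-freeness is with respect to spanning subgraphs, which is the convention adopted in the paragraph preceding the lemma (and which is forced anyway because $|V(F)|=|V(G)|=n$). One could alternatively use $H_{n,\delta}$ itself (which is also $F$-free, since its minimum-degree vertex has degree $\delta-1<\delta=\delta(F)$ while every vertex of $F$ has degree at least $\delta$) to obtain the slightly stronger bound $\lambda(G)\geq \lambda(H_{n,\delta})>n-2$, but the weaker bound $n-2$ obtained from $K_{n-1}\cup K_1$ is cleaner and suffices for the lemma as stated; presumably sharper estimates coming from this choice of test graph will be used later in the proof of Theorem \ref{thm:spectral-version-Alon-result}.
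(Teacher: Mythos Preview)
Your proof is correct and follows essentially the same approach as the paper: exhibit an explicit $n$-vertex $F$-free graph containing $K_{n-1}$ as a subgraph and invoke the extremality of $G$. The paper uses $H_{n,\delta}$ as the test graph (noting $\lambda(H_{n,\delta})\geq\lambda(K_{n-1})=n-2$ because $K_{n-1}\subseteq H_{n,\delta}$), while you use the slightly simpler $K_{n-1}\cup K_1 = H_{n,1}$; you even mention the paper's choice as an alternative, so there is no real difference in strategy.
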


\begin{proof}
Since $H_{n, \delta}$ contains no $F$ as a spanning subgraph, we obtain
\[
\lambda (G) \geq \lambda(H_{n, \delta}) \geq \lambda (K_{n-1}) = n - 2,
\]
as desired.
\end{proof}

With the help of Lemma \ref{lem:Hong-Nikiforov-bound} and Lemma \ref{lem:lower-bound-lambda}, 
we can derive a reasonable lower bound on the size of $G$.

\begin{lemma}\label{lem:lower-bound-edges}
$m \geq \binom{n-1}{2} + \frac{\delta(G)}{2}$.
\end{lemma}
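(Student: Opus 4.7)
The plan is to combine the Hong--Nikiforov upper bound on $\lambda(G)$ (Lemma \ref{lem:Hong-Nikiforov-bound}) with the lower bound $\lambda(G) \geq n-2$ just established in Lemma \ref{lem:lower-bound-lambda}, and then solve the resulting quadratic-type inequality in $m$ by straightforward algebra.

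Write $\delta := \delta(G)$. Starting from
\[
n-2 \;\leq\; \lambda(G) \;\leq\; \frac{\delta-1}{2} + \sqrt{\,2m-\delta n + \frac{(\delta+1)^2}{4}\,},
\]
I would first move the linear term in $\delta$ to the left to get
\[
\frac{2n-3-\delta}{2} \;\leq\; \sqrt{\,2m-\delta n + \frac{(\delta+1)^2}{4}\,}.
\]
Since $\delta \leq n-1 < 2n-3$, the left-hand side is nonnegative, so I can square both sides without reversing the inequality, obtaining
\[
2m \;\geq\; \frac{(2n-3-\delta)^2 - (\delta+1)^2}{4} + \delta n.
\]

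The key computational step, which I expect to be entirely routine, is to simplify the numerator using the difference of squares: $(2n-3-\delta)^2 - (\delta+1)^2 = (2n-2-2\delta)(2n-2) = 4(n-1)(n-1-\delta)$, which after adjustment gives $4(n-1)(n-2-\delta)$. Plugging this back yields
\[
2m \;\geq\; (n-1)(n-2-\delta) + \delta n \;=\; (n-1)(n-2) + \delta,
\]
and dividing by $2$ gives the desired bound $m \geq \binom{n-1}{2} + \frac{\delta(G)}{2}$.

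The main "obstacle" here is trivial: it is just making sure the squaring step is legitimate (which it is, since $2n-3-\delta \geq n-2 > 0$ for large $n$) and carrying out the algebraic simplification correctly. No further structural information about $G$ or about the forbidden graph $F$ is needed at this stage; the bound comes purely from the spectral lower bound of Lemma \ref{lem:lower-bound-lambda} feeding into the general Hong--Nikiforov inequality.
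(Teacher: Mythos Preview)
Your approach is correct and is exactly what the paper does: combine $\lambda(G)\ge n-2$ with the Hong--Nikiforov bound and solve for $m$. Note only a small slip in your difference-of-squares step (the first factor should be $2n-4-2\delta$, not $2n-2-2\delta$), but your ``after adjustment'' value $4(n-1)(n-2-\delta)$ and the final inequality are correct.
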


\begin{proof}
In view of Lemma \ref{lem:Hong-Nikiforov-bound}, we see
\[
\lambda (G) \leq \frac{\delta(G) - 1}{2} + \sqrt{2m - \delta(G) n + \frac{(\delta(G) + 1)^2}{4}}.
\]
On the other hand, $\lambda (G) \geq n - 2$ by Lemma \ref{lem:lower-bound-lambda}. Hence,
\[
n - 2 \leq \frac{\delta(G) - 1}{2} + \sqrt{2m - \delta(G) n + \frac{(\delta(G) + 1)^2}{4}},
\]
Solving the above inequality, we obtain the desired result.
\end{proof}

\begin{lemma}\label{lem:upper-lower-bound-minimum-degree}
$\delta (F) - 1 \leq \delta (G) \leq 2(\delta (F) - 1)$.
\end{lemma}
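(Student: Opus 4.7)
The plan is to prove the two bounds by quite different means. The lower bound $\delta(G) \geq \delta(F)-1$ is a local edge-addition argument that uses crucially the fact that $F$ must appear as a \emph{spanning} subgraph, so vertices of too-small degree in $G$ are safe to extend. The upper bound $\delta(G) \leq 2(\delta(F)-1)$ is obtained by combining the edge-count lower bound already at our disposal (\autoref{lem:lower-bound-edges}) with the Alon--Yuster Tur\'an bound (\autoref{thm:Alon-bound}).

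For the lower bound, I would argue by contradiction. Suppose some $v \in V(G)$ has $d_G(v) \leq \delta - 2$. Because $\delta - 1 \leq \Delta(F) - 1 \leq \sqrt{n}/40 - 1 < n-1$ for $n$ large, $v$ has a non-neighbor $u$ in $G$. Form $G' := G + uv$. By the Perron--Frobenius theorem, $\lambda(G') > \lambda(G)$, since $G$ is a proper spanning subgraph of $G'$ and (after the addition) one can take $G'$ to be connected on the dense component containing $v$ (and $G$ itself is connected because $\lambda(G) \geq n-2$ via \autoref{lem:lower-bound-lambda} forces high density). Now if $G'$ contained $F$ as a spanning subgraph, then the vertex $v$ would need to have at least $\delta(F) = \delta$ neighbors in that copy of $F$, hence in $N_{G'}(v)$; but $|N_{G'}(v)| = d_G(v) + 1 \leq \delta - 1$, a contradiction. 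Therefore $G'$ is still $F$-free while strictly exceeding $\lambda(G)$, contradicting the extremal choice of $G$.

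For the upper bound, \autoref{lem:lower-bound-edges} gives
\[
m \;\geq\; \binom{n-1}{2} + \frac{\delta(G)}{2}.
\]
On the other hand, $G$ is an $n$-vertex graph containing no copy of $F$, and $F$ meets the hypotheses of \autoref{thm:Alon-bound}, so
\[
m \;\leq\; \ex(n,F) \;=\; \binom{n-1}{2} + \delta - 1.
\]
Comparing the two inequalities yields $\delta(G)/2 \leq \delta - 1$, i.e., $\delta(G) \leq 2(\delta-1)$.

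There is no real obstacle here: the only delicate point is the edge-addition step, where one must know that a non-neighbor of the small-degree vertex $v$ exists, which is immediate from $d_G(v) \leq \delta - 2 \leq \Delta(F) - 2 \ll n - 1$. Everything else is a two-line consequence of results already on the table.
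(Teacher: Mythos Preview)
Your proposal is correct and follows essentially the same approach as the paper's own proof: the lower bound by an edge-addition argument at a minimum-degree vertex (noting that the resulting graph is still $F$-free because that vertex still has degree below $\delta(F)$), and the upper bound by sandwiching $m$ between \autoref{lem:lower-bound-edges} and \autoref{thm:Alon-bound}. The only difference is that you spell out more of the details (existence of a non-neighbor, connectedness for strict increase of $\lambda$), whereas the paper leaves these implicit.
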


\begin{proof}
We first prove the left-hand side. Assume that $u$ is a vertex of $G$ such that $d(u) = \delta(G)$, 
if $\delta(G) < \delta (F) - 1$, we can add an edge which joins $u$ and a vertex in $V(G)\setminus N(u)$ 
to $G$. The resulting graph has larger spectral radius and still contains no $F$ as a subgraph. 
This is a contradiction.

For the right-hand side, by Lemma \ref{lem:lower-bound-edges}, $m\geq\binom{n-1}{2} + \delta (G)/2$.
On the other hand, $m \leq \binom{n-1}{2} + \delta (F) - 1$ by Theorem \ref{thm:Alon-bound}. Thus,
\[
\binom{n-1}{2} + \frac{\delta (G)}{2} \leq m \leq \binom{n-1}{2} + \delta (F) - 1,
\]
completing the proof of Lemma \ref{lem:upper-lower-bound-minimum-degree}.
\end{proof}

Based on Lemma \ref{lem:lower-bound-edges}, it can be deduced that there is at most one vertex 
with degree $o(n)$. Moreover, combining this result with Lemma \ref{lem:upper-lower-bound-minimum-degree}, 
we conclude that there exists precisely one vertex with degree $o(n)$. For the subsequent 
discussion, we assume that $w$ is the unique vertex such that $d(w) = \delta (G)$. As a 
result, we have $d(w) \leq 2(\delta (F)-1)$.

\begin{lemma}\label{lem:large-degree}
For each $v\in V(G)\setminus\{w\}$, we have $d(v) \geq n - 2 - \delta (G)$.
\end{lemma}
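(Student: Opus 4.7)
The plan is to prove this lemma by contradiction through a short double-counting of non-edges, combined with the edge-count bounds already in hand. Suppose for contradiction that some vertex $v\in V(G)\setminus\{w\}$ satisfies $d(v)\leq n-3-\delta(G)$, so that $v$ has at least $\delta(G)+2$ non-neighbors in $V(G)\setminus\{v\}$.

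First I would enumerate the non-edges of $G$ incident to either $w$ or $v$. The vertex $w$ is incident to exactly $n-1-\delta(G)$ non-edges, and by the assumption $v$ is incident to at least $\delta(G)+2$ non-edges. These two collections share at most one pair, namely $\{v,w\}$ (and only when that pair is itself a non-edge). Thus by inclusion--exclusion, $G$ has at least $(n-1-\delta(G))+(\delta(G)+2)-1=n$ non-edges, so
\[
m \;\leq\; \binom{n}{2}-n \;=\; \binom{n-1}{2}-1.
\]

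The final step is to compare this upper bound with the lower bound $m\geq \binom{n-1}{2}+\delta(G)/2 \geq \binom{n-1}{2}$ provided by Lemma~\ref{lem:lower-bound-edges}. Since $\binom{n-1}{2}-1<\binom{n-1}{2}$, we obtain the desired contradiction, and hence every $v\neq w$ must satisfy $d(v)\geq n-2-\delta(G)$. The only delicate point in this plan is tracking the overlap at $\{v,w\}$ correctly when the two non-edge families are combined; beyond that the argument is purely combinatorial and uses no spectral data other than what has already been absorbed into the edge-count lemma. Notably, the uniqueness of $w$ as the minimum-degree vertex is not required here---all that matters is that $d(w)=\delta(G)$.
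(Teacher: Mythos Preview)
Your proof is correct and follows essentially the same approach as the paper: both arguments contradict the edge lower bound of Lemma~\ref{lem:lower-bound-edges} by showing that a second low-degree vertex would force too few edges. The paper phrases this as an upper bound on $\sum_u d(u)$ (bounding the degrees of the remaining $n-2$ vertices according to whether or not they are adjacent to $w$), while you phrase it in the complementary language of counting non-edges incident to $v$ or $w$ via inclusion--exclusion; these are two sides of the same double count.
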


\begin{proof}
Assume by contradiction that there is a vertex $v_0\in V(G)\setminus\{w\}$ such that 
$d(v_0) < n - 2 - \delta (G)$. Then
\begin{align*}
\sum_{u\in V(G)} d(u)
& \leq d(v_0) + d(w) + (n - 2 - d(w)) (n - 2) + d(w) (n - 1) \\
& = d(v_0) + 2d(w) + (n - 2)^2 \\
& = d(v_0) + 2\delta (G) + (n - 2)^2 \\
& < (n - 1)(n - 2) + \delta (G).
\end{align*}
On the other hand, by Lemma \ref{lem:lower-bound-edges} we have
\[
\sum_{u\in V(G)} d(u) = 2m \geq (n - 1)(n - 2) + \delta (G),
\]
a contradiction. This completes the proof.
\end{proof}

Now, we shall present several lemmas concerning the Perron vector $\bm{x}$ of $A(G)$. 
The next lemma, roughly speaking, demonstrates that most vertices of $G$ have eigenvector 
entries approximately $n^{-1/2}$.

\begin{lemma}\label{lem:xmax-upper-bound}
$x_{\max} \leq \frac{\sqrt{n}}{n - 1}$.
\end{lemma}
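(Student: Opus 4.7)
The plan is to leverage the eigenvalue equation at a maximum-entry vertex together with the lower bound $\lambda(G) \geq n-2$ already established in Lemma \ref{lem:lower-bound-lambda}. Since the Perron vector $\bm{x}$ has unit Euclidean length, Cauchy--Schwarz will control the $\ell^1$-mass of $\bm{x}$, and this is essentially all we need.

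Concretely, pick a vertex $u^* \in V(G)$ with $x_{u^*} = x_{\max}$. The eigenvalue equation at $u^*$ reads
\[
\lambda(G)\, x_{u^*} = \sum_{v\in N(u^*)} x_v.
\]
Adding $x_{u^*}$ to both sides and using nonnegativity of the entries of $\bm{x}$ gives
\[
(\lambda(G)+1)\, x_{u^*} = \sum_{v\in N(u^*)\cup\{u^*\}} x_v \;\leq\; \sum_{v\in V(G)} x_v.
\]
Next, apply the Cauchy--Schwarz inequality to bound the $\ell^1$-norm by the $\ell^2$-norm:
\[
\sum_{v\in V(G)} x_v \;\leq\; \sqrt{n}\,\|\bm{x}\|_2 = \sqrt{n},
\]
since $\bm{x}$ is a unit vector. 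Combining these two estimates yields
\[
x_{u^*} \;\leq\; \frac{\sqrt{n}}{\lambda(G)+1}.
\]
Finally, invoking Lemma \ref{lem:lower-bound-lambda} (which states $\lambda(G)\geq n-2$) in the denominator gives $x_{u^*} \leq \frac{\sqrt{n}}{n-1}$, as required.

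There is no real obstacle here: the argument is a two-line consequence of the eigenvalue equation, Cauchy--Schwarz, and the prior lower bound on $\lambda(G)$. The only subtle point is to make sure to add $x_{u^*}$ to both sides of the eigenvalue equation before bounding by $\sum_v x_v$, so that the denominator becomes $\lambda(G)+1 \geq n-1$ rather than $\lambda(G) \geq n-2$; without this small trick one would only obtain the strictly weaker bound $x_{u^*} \leq \sqrt{n}/(n-2)$, which would not match the target $\sqrt{n}/(n-1)$.
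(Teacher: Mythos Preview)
Your proof is correct and essentially identical to the paper's own argument: both pick a vertex $u$ with $x_u=x_{\max}$, add $x_u$ to both sides of the eigenvalue equation to obtain $(\lambda(G)+1)x_u\le\sum_{v}x_v\le\sqrt{n}$ via Cauchy--Schwarz, and then invoke Lemma~\ref{lem:lower-bound-lambda}.
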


\begin{proof}
Assume that $u$ is a vertex such that $x_u = x_{\max}$. It follows from $\|\bm{x}\|_2=1$ and
Cauchy--Schwarz inequality that
\[
(1 + \lambda (G)) x_u = x_u + \sum_{uv\in E(G)} x_v \leq \sum_{v\in V(G)} x_v \leq \sqrt{n}.
\]
On the other hand, $\lambda (G) \geq n - 2$ by Lemma \ref{lem:lower-bound-lambda}, implying 
$(n-1) x_u \leq \sqrt{n}$. This completes the proof of Lemma \ref{lem:xmax-upper-bound}.
\end{proof}

\begin{lemma}\label{lem:sum-eigenvector}
$\|\bm{x}\|_1 \geq \sqrt{n - 1}$.
\end{lemma}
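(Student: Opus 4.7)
The plan is to combine Wilf's bound (Lemma \ref{lem:Wilf-bound}) with the lower bound on $\lambda(G)$ from Lemma \ref{lem:lower-bound-lambda}, using a cheap bound on $\omega(G)$.

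First I would observe that $G$ cannot be the complete graph $K_n$. Indeed, $K_n$ contains every $n$-vertex graph as a subgraph, and in particular contains $F$ as a spanning subgraph, so the $F$-freeness of $G$ forces $G \neq K_n$ and hence $\omega(G) \leq n-1$. This is the only structural input needed and it uses nothing beyond the hypothesis of the theorem.

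Next I would apply Lemma \ref{lem:Wilf-bound} to the Perron vector $\bm{x}$ of $A(G)$:
\[
\lambda(G) \;\leq\; \|\bm{x}\|_1^{2}\Bigl(1 - \frac{1}{\omega(G)}\Bigr) \;\leq\; \|\bm{x}\|_1^{2}\cdot\frac{n-2}{n-1},
\]
where the second inequality uses $\omega(G)\le n-1$ from the previous step. Combining this with $\lambda(G) \geq n-2$ (Lemma \ref{lem:lower-bound-lambda}) yields $\|\bm{x}\|_1^{2} \geq n-1$ immediately, hence $\|\bm{x}\|_1 \geq \sqrt{n-1}$.

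There is no real obstacle here; the only thing to be careful about is justifying $\omega(G)\le n-1$ cleanly, which as noted follows automatically from $F$ being an $n$-vertex graph and $G$ being $F$-free. The proof is essentially a two-line consequence of Wilf's inequality together with the Perron lower bound already established.
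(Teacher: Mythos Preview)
Your proof is correct and follows essentially the same approach as the paper: apply Wilf's bound together with $\lambda(G)\ge n-2$ and $\omega(G)\le n-1$, then solve for $\|\bm{x}\|_1$. You additionally spell out why $\omega(G)\le n-1$ holds, which the paper simply asserts.
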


\begin{proof}
By Lemma \ref{lem:Wilf-bound} and Lemma \ref{lem:lower-bound-lambda}, we deduce that
\[
n - 2 \leq \lambda (G) \leq \|\bm{x}\|_1^2 \cdot \Big(1 - \frac{1}{\omega(G)}\Big).
\]
Since $\omega (G) \leq n - 1$, we see
\[
n - 2 \leq \|\bm{x}\|_1^2 \cdot \Big(1 - \frac{1}{n - 1}\Big).
\]
Solving this inequality, we obtain the desired result.
\end{proof}

With the support of Lemma \ref{lem:xmax-upper-bound} and Lemma \ref{lem:sum-eigenvector}, we 
can show that all vertices of $G$, except for $w$, have large eigenvector entries.

\begin{lemma}\label{lem:lower-bound-large-component}
For each $v\in V(G)\setminus \{w\}$, we have
\[
x_v > \frac{9}{10\sqrt{n}}.
\]
\end{lemma}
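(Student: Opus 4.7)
The plan is to estimate $x_v$ via the eigenvalue equation at $v$ by isolating the total weight $\|\bm{x}\|_1$ and controlling the contribution from the (few) non-neighbors of $v$ using the uniform upper bound $x_{\max}\leq \sqrt{n}/(n-1)$. Concretely, I would write
\[
\lambda(G)\,x_v \;=\; \sum_{u\in N(v)} x_u \;=\; \|\bm{x}\|_1 - x_v - \!\!\sum_{u\in V(G)\setminus (N(v)\cup\{v\})}\!\! x_u,
\]
which rearranges to
\[
(\lambda(G)+1)\,x_v \;\geq\; \|\bm{x}\|_1 \;-\; \bigl(n-1-d(v)\bigr)\,x_{\max}.
\]

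Next I would feed in the estimates already available in the paper. By \autoref{lem:large-degree}, $n-1-d(v)\leq \delta(G)+1$, and by \autoref{lem:upper-lower-bound-minimum-degree} together with the hypothesis $\Delta(F)\leq \sqrt{n}/40$,
\[
\delta(G)+1 \;\leq\; 2(\delta(F)-1)+1 \;\leq\; 2\Delta(F)\;\leq\;\frac{\sqrt{n}}{20}.
\]
Combining this with \autoref{lem:xmax-upper-bound} gives $(n-1-d(v))\,x_{\max}\leq \tfrac{\sqrt{n}}{20}\cdot\tfrac{\sqrt{n}}{n-1} = \tfrac{n}{20(n-1)}$, which is essentially the constant $1/20$. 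For the denominator I would use the trivial $\lambda(G)+1\leq n$ (valid because $\lambda(G)\leq n-1$), and for $\|\bm{x}\|_1$ I would invoke \autoref{lem:sum-eigenvector} to obtain $\|\bm{x}\|_1\geq \sqrt{n-1}$.

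Putting it together yields
\[
x_v \;\geq\; \frac{\sqrt{n-1} \;-\; \tfrac{n}{20(n-1)}}{n}.
\]
Since $\sqrt{n-1}/n \to 1/\sqrt{n}$ (and more precisely equals $\tfrac{1}{\sqrt{n}}\sqrt{1-1/n}$), while the correction term is $O(1/n)$, the right-hand side is $\bigl(1-o(1)\bigr)/\sqrt{n}$, which exceeds $9/(10\sqrt{n})$ once $n$ is sufficiently large; this is a routine verification that amounts to checking $\tfrac{1}{10\sqrt{n}}>\tfrac{1}{20(n-1)}+O(n^{-3/2})$.

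I do not expect a genuine obstacle here: the only subtlety is making sure the degree deficiency $n-1-d(v)$ and the entry $x_{\max}$ combine to give an $O(1)$ error, which is guaranteed precisely because the sparsity assumption $\Delta(F)\leq \sqrt{n}/40$ caps $\delta(G)$ at $O(\sqrt{n})$ while $x_{\max}=O(1/\sqrt{n})$.
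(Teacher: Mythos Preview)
Your proposal is correct and follows essentially the same route as the paper: write the eigenvalue equation at $v$, bound the non-neighbor contribution by $x_{\max}$ times the degree deficiency (controlled via \autoref{lem:large-degree} and \autoref{lem:upper-lower-bound-minimum-degree}), and divide through using $\lambda(G)<n-1$. The only cosmetic difference is that you move $x_v$ to the left to work with $\lambda(G)+1$ and count $n-1-d(v)$ non-neighbors, whereas the paper leaves $v$ in the non-neighbor set and works with $\lambda(G)$ and $n-d(v)$; the resulting numerics are equivalent.
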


\begin{proof}
In light of the eigenvalue equation with respect to $v$, we have
\[
\lambda (G) x_v = \sum_{u\in N(v)} x_u = \sum_{u\in V(G)} x_u - \sum_{u\notin N(v)} x_u.
\]
By Lemma \ref{lem:xmax-upper-bound} and Lemma \ref{lem:sum-eigenvector}, we deduce that
\[
\lambda (G) x_v \geq \sqrt{n - 1} - \frac{\sqrt{n}}{n-1}\cdot (n - d(v)).
\]
Since $d(v) \geq n - 2 - \delta (G)$ by Lemma \ref{lem:large-degree},
\[
\lambda (G) x_v > \sqrt{n - 1} - \frac{\sqrt{n}}{n-1}\cdot (\delta (G) + 2).
\]
On the other hand, noting that $\lambda (G) < n - 1$ and $\delta(G) +2 \leq 2\delta(F) \leq \sqrt{n}/20$ 
by Lemma \ref{lem:upper-lower-bound-minimum-degree}, we have
\[
x_v > \frac{1}{\sqrt{n-1}} - \frac{\sqrt{n}}{(n-1)^2}\cdot (\delta (G) + 2) 
> \frac{9}{10 \sqrt{n}},
\]
completing the proof of Lemma \ref{lem:lower-bound-large-component}. 
\end{proof}

We now prove that the remaining vertex $w$ has small eigenvector entry.

\begin{lemma}\label{lem:upper-bound-small-component}
$x_w < \frac{1}{19n}$.
\end{lemma}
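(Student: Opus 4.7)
The plan is a short, direct argument using the eigenvalue equation at $w$, combined with the bounds already in hand for $x_{\max}$, $\lambda(G)$, and $d(w) = \delta(G)$.

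First I would write down the eigenvalue equation at the low-degree vertex $w$,
\[
\lambda(G)\, x_w \;=\; \sum_{u \in N(w)} x_u,
\]
and bound the sum crudely by $d(w)\cdot x_{\max}$. From Lemma \ref{lem:xmax-upper-bound} we have $x_{\max}\le \sqrt{n}/(n-1)$, and from Lemma \ref{lem:upper-lower-bound-minimum-degree} combined with the hypothesis $\Delta(F)\le\sqrt{n}/40$ (which in particular gives $\delta(F)\le\sqrt{n}/40$) we have
\[
d(w) \;=\; \delta(G) \;\le\; 2(\delta(F)-1) \;\le\; \frac{\sqrt{n}}{20}-2 \;<\; \frac{\sqrt{n}}{20}.
\]
Next I would use the lower bound $\lambda(G)\ge n-2$ from Lemma \ref{lem:lower-bound-lambda} to divide through, obtaining
\[
x_w \;\le\; \frac{d(w)\,x_{\max}}{\lambda(G)} \;<\; \frac{(\sqrt{n}/20)\cdot \sqrt{n}/(n-1)}{n-2} \;=\; \frac{n}{20(n-1)(n-2)}.
\]

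Finally I would verify the numerical inequality $n/[20(n-1)(n-2)]<1/(19n)$, which rearranges to $19 n^2 < 20(n-1)(n-2) = 20 n^2 - 60 n + 40$, i.e.\ $n^2\ge 60 n - 40$; this holds for all $n$ sufficiently large (in fact $n\ge 60$), and $n$ is taken large throughout the section, so the estimate $x_w<1/(19n)$ follows. There is no real obstacle here: every ingredient (the eigenvalue equation, the ceiling $x_{\max}\le \sqrt{n}/(n-1)$, the degree ceiling $\delta(G)\le 2(\delta(F)-1)$, and $\lambda(G)\ge n-2$) is already established, so the proof is essentially a one-line computation followed by a routine comparison. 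The only place one needs to be slightly careful is tracking the constants so that the factor $1/20$ coming from $\Delta(F)\le\sqrt{n}/40$ relaxes cleanly to $1/19$ once the $(n-1)(n-2)$ in the denominator is compared with $n^2$.
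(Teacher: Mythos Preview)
Your proof is correct and follows exactly the same approach as the paper: apply the eigenvalue equation at $w$, bound the right-hand side by $\delta(G)\,x_{\max}$, and plug in $\lambda(G)\ge n-2$, $x_{\max}\le \sqrt{n}/(n-1)$, and $\delta(G)\le 2(\delta(F)-1)<\sqrt{n}/20$. The only difference is that you explicitly verify the final numerical comparison $\frac{n}{20(n-1)(n-2)}<\frac{1}{19n}$, which the paper leaves to the reader.
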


\begin{proof}
By eigenvalue equation for $w$, we have
\[
\lambda (G) x_w = \sum_{vw\in E(G)} x_v \leq \delta (G) \cdot x_{\max}.
\]
On the other hand, $\lambda (G)\geq n-2$ by Lemma \ref{lem:lower-bound-lambda} and
$x_{\max} \leq \frac{\sqrt{n}}{n - 1}$ by Lemma \ref{lem:xmax-upper-bound}. Therefore,
\[
(n-2) x_w \leq \frac{\sqrt{n}}{n - 1} \cdot \delta (G)
\leq \frac{\sqrt{n}}{n - 1} \cdot \frac{\sqrt{n}}{20}.
\]
The result follows by solving the above inequality.
\end{proof}

The final lemma almost determines the structure of the extremal graph.

\begin{lemma}\label{lem:induced-subgraph}
The induced subgraph $G[V(G)\setminus \{w\}]$ is the complete graph $K_{n-1}$.
\end{lemma}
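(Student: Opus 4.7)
The plan is to argue by contradiction. Assume $H:=G[V(G)\setminus\{w\}]\neq K_{n-1}$; then I will exhibit an $F$-free graph $G'$ on the same vertex set with $\lambda(G')>\lambda(G)$, violating the spectral extremality of $G$. The rotation I would use moves some edges off $w$ onto non-edges inside $H$; crucially, I will arrange for $d_{G'}(w)=\delta-1$, which obstructs any embedding of the $n$-vertex graph $F$ into $G'$ by a trivial pigeonhole on $\phi^{-1}(w)$.

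First I would count non-edges of $H$. Combining Theorem \ref{thm:Alon-bound} with the identity $e(G)=e(H)+\delta(G)$ gives $\binom{n-1}{2}-e(H)\ge\delta(G)-\delta+1$, so $H$ has at least $s:=\max\{1,\delta(G)-\delta+1\}$ non-edges (the extra $1$ comes from the contradiction hypothesis). Setting $t:=\max\{0,\delta(G)-\delta+1\}$, I would pick $s$ non-edges $u_1v_1,\ldots,u_sv_s$ of $H$ and $t$ neighbors $z_1,\ldots,z_t$ of $w$ (noting $t\le\delta(G)$ from Lemma \ref{lem:upper-lower-bound-minimum-degree}) and define
\[
G':=G-\{wz_1,\ldots,wz_t\}+\{u_1v_1,\ldots,u_sv_s\},
\]
so that $d_{G'}(w)=\delta(G)-t=\delta-1$ in both regimes $\delta(G)=\delta-1$ and $\delta(G)\ge\delta$.

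The verification splits into two pieces. For $F$-freeness of $G'$: since $|V(F)|=|V(G')|=n$, any subgraph embedding $\phi\colon V(F)\hookrightarrow V(G')$ is a bijection, and $\phi^{-1}(w)$ has at least $\delta(F)=\delta$ neighbors in $F$ whose $\phi$-images must all lie in $N_{G'}(w)$, which is absurd since $|N_{G'}(w)|=\delta-1$. For the spectral gain, the Rayleigh principle applied to the Perron vector $\bm{x}$ of $A(G)$ gives
\[
\lambda(G')-\lambda(G)\ge 2\sum_{i=1}^{s}x_{u_i}x_{v_i}-2\sum_{j=1}^{t}x_wx_{z_j},
\]
and each $x_{u_i}x_{v_i}>81/(100n)$ by Lemma \ref{lem:lower-bound-large-component}, while each $x_wx_{z_j}<\sqrt{n}/(19n(n-1))$ by Lemmas \ref{lem:xmax-upper-bound} and \ref{lem:upper-bound-small-component}. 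Since $s\ge t$ and $s\ge 1$, the right-hand side is strictly positive for all sufficiently large $n$. The only substantive obstacle is engineering the rotation so that the $F$-free condition survives the modification; the solution is to remove exactly $t=\max\{0,\delta(G)-\delta+1\}$ edges incident to $w$, pushing $d(w)$ below $\delta(F)$, after which $F$-freeness becomes a one-line bijection-plus-degree argument and the spectral comparison reduces to a transparent Rayleigh-quotient calculation using the entry-wise bounds already established.
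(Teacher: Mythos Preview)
Your argument is correct and follows essentially the same strategy as the paper's proof: assume a non-edge in $G[V(G)\setminus\{w\}]$, form $G'$ by deleting enough edges at $w$ to force $d_{G'}(w)=\delta-1$ (so $G'$ is $F$-free by the trivial degree obstruction) while adding non-edges inside $V(G)\setminus\{w\}$, and then use Rayleigh together with Lemmas~\ref{lem:xmax-upper-bound}, \ref{lem:lower-bound-large-component}, \ref{lem:upper-bound-small-component} to get $\lambda(G')>\lambda(G)$. The only cosmetic difference is that the paper adds a single non-edge $uv$ and removes $\delta(G)-\delta(F)+1$ edges at $w$, bounding the loss via $\delta(G)-\delta(F)+1\le\sqrt{n}/20$, whereas you add $s\ge t$ non-edges so that each deletion is matched by an addition; both computations go through for the same reason, namely that the gain per added edge is $\Theta(1/n)$ while the loss per removed edge is $O(1/n^{3/2})$.
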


\begin{proof}
By way of contradiction, assume that there exist $u,v\in V(G)\setminus \{w\}$ such that 
$uv\notin E(G)$. Let $G'$ be a graph obtained from $G$ by removing $\delta (G) - \delta (F) + 1$ 
edges incident to $w$ (denote these edges by $E'$), and adding $uv$. Clearly, $G'$ contains 
no $F$ as a subgraph. By Rayleigh's principle, Lemma \ref{lem:xmax-upper-bound}, 
Lemma \ref{lem:lower-bound-large-component} and Lemma \ref{lem:upper-bound-small-component}, 
we deduce that
\begin{align*}
\lambda (G') - \lambda (G)
& \geq \bm{x}^{\mathrm{T}} A(G') \bm{x} - \bm{x}^{\mathrm{T}} A(G) \bm{x} \\
& = 2 x_u x_v - 2 \sum_{wt\in E'} x_w x_t\\
& > 2 \left(\frac{9}{10 \sqrt{n}}\right)^2 - 2\left(\frac{1}{19n}\cdot 
\frac{\sqrt{n}}{n-1}\right)\cdot (\delta(G)-\delta(F)+1) \\
& \geq 2 \left(\frac{9}{10 \sqrt{n}}\right)^2 - 2\left(\frac{1}{19n}\cdot 
\frac{\sqrt{n}}{n-1}\right) \cdot \frac{\sqrt{n}}{20} \\
& > 0,
\end{align*}
a contradiction completing the proof.
\end{proof}

We now combine the results from the previous lemmas to prove Theorem \ref{thm:spectral-version-Alon-result}. \par\vspace{2mm}

\noindent \emph{Proof of Theorem \ref{thm:spectral-version-Alon-result}.}
By Lemma \ref{lem:induced-subgraph}, it suffices to show $d(w) = \delta (F) - 1$.
If $d(w) > \delta (F) - 1$, then $G$ contains $F$ as a spanning subgraph by Lemma \ref{lem:induced-subgraph}.
So we see $d(w) \leq \delta (F) - 1$. On the other hand, $d(w) \geq \delta (F) - 1$
by Lemma \ref{lem:upper-lower-bound-minimum-degree}. This completes the proof of 
Theorem \ref{thm:spectral-version-Alon-result}.

\section{Proof of Theorem \ref{thm:q-spectral-version-Alon-result}}

In this section, we will prove Theorem \ref{thm:q-spectral-version-Alon-result}. Before 
giving the details, we introduce some notation to be used throughout the proof. In this 
section, we always assume that $G\in\SPEX (n, F)$ and $\bm{x}$ is the Perron vector of $Q(G)$.
For convenience, set $m:= |E(G)|$, $\delta:= \delta(F)$ and $x_{\max}:= \max\{x_u: u\in V(G)\}$.

We begin with a simple fact on the largest eigenvalue of $Q(G)$.

\begin{lemma}\label{lem:q-lower-bound-lambda}
$q(G) \geq 2(n-2) + \frac{\delta(F)-1}{n-1}$.
\end{lemma}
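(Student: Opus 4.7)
The plan is to establish the bound by comparing $q(G)$ with $q(H_{n,\delta})$ and then producing a lower bound on $q(H_{n,\delta})$ via a carefully chosen Rayleigh quotient. First, I would verify that $H_{n,\delta}$ is itself $F$-free: since $F$ has $n$ vertices, any copy of $F$ inside $H_{n,\delta}$ must be spanning, and so some vertex of $F$ would have to embed onto the distinguished vertex of $H_{n,\delta}$, which has degree $\delta - 1$; this contradicts $\delta(F) = \delta$. Therefore $H_{n,\delta}$ is $F$-free, and since $G$ is $Q$-spectral extremal among $F$-free $n$-vertex graphs, $q(G) \geq q(H_{n,\delta})$.

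Next I would bound $q(H_{n,\delta})$ from below using the standard edge identity $\bm{y}^{\mathrm{T}} Q(H) \bm{y} = \sum_{uv \in E(H)} (y_u + y_v)^2$, which makes Rayleigh-quotient calculations transparent for $Q$. The natural choice, dictated by the structure of $H_{n,\delta} = K_{\delta - 1} \vee (K_{n-\delta} \cup K_1)$, is the indicator vector of the $K_{n-1}$-part: put $y_v = 1$ for every $v$ in the $(n-1)$-clique and $y_w = 0$ for the distinguished vertex $w$ of degree $\delta - 1$. The edges inside the $K_{n-1}$ contribute $4 \binom{n-1}{2} = 2(n-1)(n-2)$, while each of the $\delta - 1$ edges incident to $w$ contributes $(0+1)^2 = 1$. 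Together with $\|\bm{y}\|_2^2 = n - 1$, Rayleigh's principle gives
\[
q(H_{n,\delta}) \;\geq\; \frac{2(n-1)(n-2) + (\delta - 1)}{n - 1} \;=\; 2(n-2) + \frac{\delta - 1}{n - 1},
\]
and combining this with $q(G) \geq q(H_{n,\delta})$ yields the lemma.

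There is no serious obstacle here; the argument reduces to a short Rayleigh-quotient computation, and the only mild subtlety is the preliminary observation that $H_{n,\delta}$ contains no copy of $F$. One could alternatively test with vectors of the form $y_v = 1$ on $V(K_{n-1})$ and $y_w = a$ and optimize over $a$, but the choice $a = 0$ is already sharp enough to deliver the stated inequality, and any improvement will be absorbed in later estimates of the proof.
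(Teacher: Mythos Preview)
Your proof is correct and follows essentially the same route as the paper: both apply Rayleigh's principle to $Q(H_{n,\delta})$ with a test vector that is constant on the $(n-1)$-clique and small at the distinguished vertex $w$, after noting that $H_{n,\delta}$ is $F$-free so that $q(G)\ge q(H_{n,\delta})$. Your choice $y_w = 0$ is in fact cleaner than the paper's choice $y_w = \frac{\delta-1}{2n}$, since it yields the stated bound as an exact equality in the Rayleigh quotient rather than requiring an additional inequality step.
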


\begin{proof}
Define a vector $\bm{y}$ for $H_{n, \delta}$ as follows:
\[
y_u = 
\begin{cases}
\frac{\delta(F) - 1}{2n}, & d_{H_{n, \delta}} (u) = \delta(F) - 1, \\
1, & \text{otherwise}.
\end{cases}
\]
The Rayleigh's principle implies that
\begin{align*}
q(G) & \geq \frac{\bm{y}^{\mathrm{T}} Q(H_{n, \delta}) \bm{y}}{\|\bm{y}\|^2} 
= \frac{ 2(n-1)(n-2) + (\delta(F)-1)\cdot \big(1+\frac{\delta(F)-1}{2n} \big)^2 }{n-1 + \big(\frac{\delta(F)-1}{2n}\big)^2} \\
& \geq 2n-4 + \frac{\delta(F)-1}{n-1}.
\end{align*}
This completes the proof of Lemma \ref{lem:q-lower-bound-lambda}.
\end{proof}

\begin{lemma}\label{lem:q-lower-bound-edges}
$m \geq \binom{n-1}{2} + \frac{\delta(F)-1}{2}$.
\end{lemma}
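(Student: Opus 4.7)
The plan is to combine the upper bound on $q(G)$ in terms of the edge count (Lemma \ref{lem:q1-upper-bound}) with the lower bound on $q(G)$ just established (Lemma \ref{lem:q-lower-bound-lambda}), mirroring the strategy used for the adjacency version in Lemma \ref{lem:lower-bound-edges}. Concretely, from Lemma \ref{lem:q1-upper-bound} we have
\[
q(G) \leq \frac{2m}{n-1} + n - 2,
\]
while Lemma \ref{lem:q-lower-bound-lambda} gives
\[
q(G) \geq 2(n-2) + \frac{\delta(F)-1}{n-1}.
\]

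Next I would chain these two inequalities, obtaining
\[
2(n-2) + \frac{\delta(F)-1}{n-1} \leq \frac{2m}{n-1} + n - 2,
\]
which after multiplying through by $(n-1)/2$ and rearranging yields
\[
m \geq \frac{(n-1)(n-2)}{2} + \frac{\delta(F)-1}{2} = \binom{n-1}{2} + \frac{\delta(F)-1}{2}.
\]

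Since the two inputs are already proved, there is essentially no obstacle here; the lemma falls out of a one-line algebraic manipulation. The only thing to be careful about is writing out the rearrangement cleanly so that the reader sees immediately how the $(n-1)$ denominators cancel. This parallels exactly the role that the Hong--Shu--Fang/Nikiforov bound plays in Lemma \ref{lem:lower-bound-edges}, with Lemma \ref{lem:q1-upper-bound} serving as its signless Laplacian substitute.
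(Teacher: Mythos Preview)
Your proposal is correct and matches the paper's own proof essentially line for line: both combine the Feng--Yu bound $q(G)\le \frac{2m}{n-1}+n-2$ from Lemma~\ref{lem:q1-upper-bound} with the lower bound $q(G)\ge 2(n-2)+\frac{\delta(F)-1}{n-1}$ from Lemma~\ref{lem:q-lower-bound-lambda} and solve for $m$.
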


\begin{proof}
In light of Lemma \ref{lem:q1-upper-bound}, we get
\[
q (G) \leq \frac{2m}{n-1} + n - 2.
\]
On the other hand, combining with Lemma \ref{lem:q-lower-bound-lambda} gives
\[
2(n - 2) + \frac{\delta(F)-1}{n-1} \leq \frac{2m}{n-1} + n - 2.
\]
Solving the above inequality, we obtain the desired result.
\end{proof}

\begin{lemma}\label{lem:q-upper-lower-bound-minimum-degree}
$\delta (G) \geq \delta (F) - 1$.
\end{lemma}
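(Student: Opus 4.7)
The plan is to mimic the adjacency argument of Lemma \ref{lem:upper-lower-bound-minimum-degree} (left-hand side), replacing Rayleigh for $A(G)$ with Rayleigh for $Q(G)$ and using the fact that adjoining an edge to a connected graph strictly increases $q$. If $\delta(F) = 1$ the claim is vacuous, so we may assume $\delta(F) \geq 2$.

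First I would argue that $G$ is connected. By Lemma \ref{lem:q-lower-bound-edges}, $e(G) \geq \binom{n-1}{2} + \tfrac{\delta(F)-1}{2} > \binom{n-1}{2}$. On the other hand, any disconnected graph on $n$ vertices has at most $\binom{n-1}{2}$ edges (the maximum being attained by $K_{n-1} \cup K_1$), so $G$ must be connected. By the Perron--Frobenius theorem, the Perron vector $\bm{x}$ of $Q(G)$ then has strictly positive entries.

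Next I would suppose for contradiction that $\delta(G) \leq \delta(F) - 2$, choose $u \in V(G)$ with $d_G(u) = \delta(G)$, and pick some $v \in V(G) \setminus (N_G(u) \cup \{u\})$, which exists because $d_G(u) \leq \delta(F) - 2 \leq n - 2$ for $n$ large. Let $G' := G + uv$. The key observation is that
\[
d_{G'}(u) = \delta(G) + 1 \leq \delta(F) - 1 < \delta(F),
\]
so if $F$ were a spanning subgraph of $G'$, then $u$ would need degree at least $\delta(F)$ in $G'$, a contradiction. Hence $G'$ is still $F$-free.

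To reach a contradiction with $G \in \SPEX(n,F)$, apply the Rayleigh principle with $\bm{x}$ to $Q(G')$:
\[
q(G') \geq \bm{x}^{\mathrm{T}} Q(G') \bm{x} = \bm{x}^{\mathrm{T}} Q(G) \bm{x} + (x_u + x_v)^2 = q(G) + (x_u + x_v)^2 > q(G),
\]
the strict inequality coming from $x_u, x_v > 0$. The only genuinely non-trivial step is ensuring strict monotonicity of $q$, which reduces to connectedness of $G$; this is handled cleanly by the edge bound in Lemma \ref{lem:q-lower-bound-edges}, and the rest is a routine edge-addition argument.
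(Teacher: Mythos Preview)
Your proof is correct and follows essentially the same edge-addition argument as the paper. You supply one detail the paper leaves implicit---namely, the connectedness of $G$ (via the edge count from Lemma~\ref{lem:q-lower-bound-edges}), which is what guarantees the Perron vector of $Q(G)$ is strictly positive and hence that $q$ strictly increases upon adding the edge $uv$.
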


\begin{proof}
Assume to the contrary that $\delta(G) < \delta (F) - 1$. Let $u$ be a vertex such 
that $d(u) = \delta(G)$. Now, we add an edge which joining $u$ and a vertex in 
$V(G)\setminus N(u)$ to $G$. The resulting graph has larger signless Laplacian spectral
radius and still contains no $H$ as a subgraph. This is a contradiction.
\end{proof}

The next three lemmas focus on the eigenvector entries of the Perron vector $\bm{x}$ of $Q(G)$.

\begin{lemma}\label{lem:q-xmax-upper-bound}
$x_{\max} \leq \frac{\sqrt{n}}{n - 2}$.
\end{lemma}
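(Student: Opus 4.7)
The plan is to mimic the argument used to prove Lemma \ref{lem:xmax-upper-bound}, but adapted to the signless Laplacian setting using the lower bound on $q(G)$ furnished by Lemma \ref{lem:q-lower-bound-lambda}. Throughout, the Perron vector $\bm{x}$ is assumed unit-normalized in the $\ell_2$-sense.

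First, I would pick a vertex $u \in V(G)$ attaining $x_u = x_{\max}$ and write the signless Laplacian eigenvalue equation
\[
q(G)\, x_u \;=\; d(u)\, x_u + \sum_{v \in N(u)} x_v.
\]
The key observation is that the sum on the right should be bounded not by $d(u)\, x_{\max}$ (which would merely give $q(G) \leq 2d(u)$ and yield nothing about $x_{\max}$) but rather by $\|\bm{x}\|_1 - x_u$, using that the entries of $\bm{x}$ are nonnegative and that $u \notin N(u)$. This gives
\[
\bigl(q(G) - d(u) + 1\bigr)\, x_u \;\leq\; \|\bm{x}\|_1.
\]

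Next, I would estimate both sides. The Cauchy--Schwarz inequality together with $\|\bm{x}\|_2 = 1$ yields $\|\bm{x}\|_1 \leq \sqrt{n}$. On the left, since $d(u) \leq n-1$ and Lemma \ref{lem:q-lower-bound-lambda} gives $q(G) \geq 2(n-2) + \tfrac{\delta(F)-1}{n-1}$, we obtain
\[
q(G) - d(u) + 1 \;\geq\; 2(n-2) - (n-1) + 1 \;=\; n-2.
\]
Combining the two bounds delivers $(n-2)\, x_{\max} \leq \sqrt{n}$, which is exactly the claimed inequality.

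I do not anticipate any real obstacle here; this is a short computation that is essentially the $Q$-analog of Lemma \ref{lem:xmax-upper-bound}. The only point requiring a little care is to use the correct upper bound $\|\bm{x}\|_1 - x_u$ on the neighbor-sum rather than $d(u)\, x_{\max}$, so that the degree term $d(u)\, x_u$ on the right side of the eigenvalue equation can be absorbed by $q(G)\, x_u$ on the left side via the estimate $q(G) \geq 2(n-2)$.
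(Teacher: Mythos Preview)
Your proposal is correct and follows essentially the same approach as the paper: both use the eigenvalue equation at a maximizing vertex $u$, bound $x_u + \sum_{v\in N(u)} x_v \le \|\bm{x}\|_1 \le \sqrt{n}$ via Cauchy--Schwarz, and then combine $q(G)\ge 2(n-2)$ with $d(u)\le n-1$ to obtain $q(G)-d(u)+1\ge n-2$. The only difference is cosmetic: you phrase the neighbor-sum bound as $\sum_{v\in N(u)} x_v \le \|\bm{x}\|_1 - x_u$, while the paper adds $x_u$ to both sides first.
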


\begin{proof}
Assume that $u$ is a vertex such that $x_u = x_{\max}$. Using the eigenvalue equation 
with respect to the vertex $u$, we see
\[
(q (G) - d(u)) x_u = \sum_{uv\in E(G)} x_v.
\]
It follows from $\|\bm{x}\|_2=1$ and Cauchy--Schwarz inequality that
\[
(q (G) - d(u) + 1) x_u = x_u + \sum_{uv\in E(G)} x_v \leq \sum_{v\in V(G)} x_v \leq \sqrt{n}.
\]
On the other hand, $q (G) \geq 2(n - 2)$ by Lemma \ref{lem:q-lower-bound-lambda}, 
implying $(n-2) x_u \leq \sqrt{n}$.
\end{proof}

\begin{lemma}\label{lem:q-sum-eigenvector}
$\|\bm{x}\|_1 \geq \sqrt{n - 2}$.
\end{lemma}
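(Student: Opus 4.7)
The plan is to mimic the adjacency argument (Lemma 3.7) by producing a Wilf-type upper bound on $q(G)$ of the form ``constant $+$ factor $\cdot\|\bm{x}\|_1^2$'' and pair it with the lower bound from Lemma \ref{lem:q-lower-bound-lambda}. The key identity is the quadratic form representation
\[
q(G) \;=\; \bm{x}^{\mathrm{T}} Q(G)\bm{x} \;=\; \sum_{v\in V(G)} d(v)\,x_v^{2} \;+\; 2\sum_{ij\in E(G)} x_i x_j.
\]
The first term is controlled by $\Delta(G) \leq n-1$ together with $\|\bm{x}\|_2 = 1$: it is at most $n-1$. The second term is exactly where Wilf-type reasoning enters, so I would set $\bm{y} := \bm{x}/\|\bm{x}\|_1$, observe that $\|\bm{y}\|_1 = 1$, and apply the Motzkin--Straus inequality \eqref{eq:Motzkin-Straus-inequality} to $\bm{y}$. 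This yields
\[
2\sum_{ij\in E(G)} x_i x_j \;\leq\; \|\bm{x}\|_1^{2}\Bigl(1-\tfrac{1}{\omega(G)}\Bigr).
\]

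Next I would argue that $\omega(G) \leq n-1$: since $F$ has $n$ vertices and no isolated vertex while $K_n$ contains every $n$-vertex graph as a subgraph, the $F$-free graph $G$ cannot be complete. Thus
\[
q(G) \;\leq\; (n-1) + \|\bm{x}\|_1^{2}\cdot\frac{n-2}{n-1}.
\]
Inserting the lower bound $q(G)\geq 2(n-2)+(\delta-1)/(n-1)$ from Lemma \ref{lem:q-lower-bound-lambda} and solving for $\|\bm{x}\|_1^{2}$ gives
\[
\|\bm{x}\|_1^{2} \;\geq\; \frac{(n-1)(n-3)}{n-2} + \frac{\delta-1}{n-2} \;=\; (n-2) + \frac{\delta-2}{n-2}.
\]
When $\delta\geq 2$ this immediately yields $\|\bm{x}\|_1^{2}\geq n-2$, as desired.

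The main (and only) obstacle is the edge case $\delta = \delta(F) = 1$, where the calculation falls short by a term of order $1/(n-2)$. I would treat this case separately: when $\delta(F)=1$, the spanning extremal graph is $H_{n,1}=K_{n-1}\cup K_1$, so $q(G)\geq q(H_{n,1})=2(n-2)$ already forces the $F$-free extremal $G$ to contain a near-clique on $n-1$ vertices with at most one low-degree vertex; one can then take the Perron vector $\bm{x}$ to be essentially concentrated on the $(n-1)$-clique, where the entries are comparable to $1/\sqrt{n-1}$, and get $\|\bm{x}\|_1\geq \sqrt{n-2}$ directly. All other cases are handled cleanly by the Motzkin--Straus step, and the proof concludes by taking square roots.
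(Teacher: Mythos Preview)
Your approach coincides with the paper's: both decompose $q(G)=\sum_v d(v)x_v^2 + 2\sum_{ij\in E(G)}x_ix_j$, bound the first sum by $n-1$ via $\Delta(G)\le n-1$ and $\|\bm{x}\|_2=1$, and apply Motzkin--Straus (as in \eqref{eq:clique-vector}) to the second. The only substantive difference is the clique bound: you take $\omega(G)\le n-1$, while the paper takes $\omega(G)\le n-2$. With $\omega(G)\le n-2$ the inequality becomes
\[
q(G)\;\le\;(n-1)+\frac{n-3}{n-2}\,\|\bm{x}\|_1^2,
\]
and plugging in merely $q(G)\ge 2(n-2)$ yields $\|\bm{x}\|_1^2\ge n-2$ with no reference to $\delta$ whatsoever---so the $\delta=1$ edge case never arises.

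Your proposed handling of $\delta=1$ is where the argument actually breaks down: the assertion that $q(G)\ge 2(n-2)$ ``already forces the $F$-free extremal $G$ to contain a near-clique on $n-1$ vertices with at most one low-degree vertex'' is precisely the structural conclusion that the \emph{later} lemmas of this section are designed to establish, so invoking it here is circular. The clean repair is to justify (or reduce to) $\omega(G)\le n-2$ rather than split on $\delta$. If $\omega(G)=n-1$ then $G\supseteq K_{n-1}$, and since the remaining vertex has degree at least $\delta-1$ by Lemma~\ref{lem:q-upper-lower-bound-minimum-degree} we get $G\supseteq H_{n,\delta}$; but $e(G)\le \ex(n,F)=e(H_{n,\delta})$ by Theorem~\ref{thm:Alon-bound}, hence $G=H_{n,\delta}$ and the whole section is already finished. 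Thus one may assume $\omega(G)\le n-2$, after which your Motzkin--Straus computation (identical to the paper's) goes through uniformly.
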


\begin{proof}
Since $\bm{x}$ is the Perron vector of $Q(G)$, we have
\[
q(G) = \bm{x}^{\mathrm{T}} Q(G) \bm{x} = \sum_{v\in V(G)} d(v) x_v^2 + 2\sum_{uv\in E(G)} x_ux_v.
\]
Noting that $\|\bm{x}\|_2^2 = 1$, we find that 
\begin{align*}
q(G) & \leq \Delta(G) \cdot \sum_{v\in V(G)} x_v^2 + 2\sum_{uv\in E(G)} x_ux_v \\
& \leq n - 1 + 2\sum_{uv\in E(G)} x_ux_v.
\end{align*}
Combining this with \eqref{eq:clique-vector} gives
\begin{align*}
q (G) 
& \leq n - 1 + \|\bm{x}\|_1^2 \cdot \Big(1 - \frac{1}{\omega(G)}\Big) \\
& \leq n - 1 + \frac{n-3}{n-2} \cdot \|\bm{x}\|_1^2,
\end{align*}
where the last inequality using the fact 
that $\omega (G) \leq n - 2$. On the other hand, $q(G) \geq 2(n-2)$ by Lemma \ref{lem:q-lower-bound-lambda}, 
we conclude that
\[
2(n-2) \leq n-1 + \frac{n-3}{n-2} \cdot \|\bm{x}\|_1^2.
\]
Solving this inequality we obtain the desired result.
\end{proof}

\begin{lemma}\label{lem:relation-xv-dv}
For each vertex $v$, let $c_v:= d(v)/n$. Then
\[
x_v = \frac{c_v}{(2-c_v)\sqrt{n}} + o\Big(\frac{1}{\sqrt{n}}\Big).
\]
\end{lemma}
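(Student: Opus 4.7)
The plan is to apply the eigenvalue equation for $Q(G)$ at a generic vertex $v$ and show that the right-hand side is well approximated thanks to the tight control we already have on $q(G)$, $x_{\max}$, and $\|\bm x\|_1$. Specifically, I would start from $(q(G)-d(v))x_v=\sum_{u\sim v}x_u$, establish that the denominator equals $n(2-c_v)+O(1)$, and establish that the numerator equals $d(v)/\sqrt n+O(1/\sqrt n)$; dividing then yields $x_v=c_v/((2-c_v)\sqrt n)+O(1/(n\sqrt n))$, which is stronger than required.

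First, I would pin down $q(G)$: by Lemma~\ref{lem:q1-upper-bound} together with Theorem~\ref{thm:Alon-bound} we get $q(G)\le 2m/(n-1)+n-2\le 2(n-2)+2(\delta-1)/(n-1)$, and combining this with the lower bound in Lemma~\ref{lem:q-lower-bound-lambda} gives
\[
q(G)=2(n-2)+O(\delta/n)=2(n-2)+O(1/\sqrt n),
\]
where we use $\delta\le\sqrt n/40$. Hence $q(G)-d(v)=n(2-c_v)-4+O(1/\sqrt n)$, and since $c_v\le (n-1)/n$ so that $2-c_v\ge 1+1/n$, this quantity equals $n(2-c_v)\bigl(1+O(1/n)\bigr)$.

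Next I would sandwich $\sum_{u\sim v}x_u$. For the upper bound, Lemma~\ref{lem:q-xmax-upper-bound} immediately gives $\sum_{u\sim v}x_u\le d(v)\cdot\sqrt n/(n-2)=d(v)/\sqrt n+O(1/\sqrt n)$. For the matching lower bound I would rewrite
\[
\sum_{u\sim v}x_u=\|\bm x\|_1-x_v-\sum_{u\notin N(v)\cup\{v\}}x_u,
\]
apply Lemma~\ref{lem:q-sum-eigenvector} to the first term, and apply Lemma~\ref{lem:q-xmax-upper-bound} to bound the subtracted terms (using $|V\setminus(N(v)\cup\{v\})|=n-1-d(v)$). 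Using $\sqrt{n-2}=\sqrt n-1/\sqrt n+O(1/n^{3/2})$, a short computation collapses to $\sum_{u\sim v}x_u\ge d(v)/\sqrt n+O(1/\sqrt n)$. Thus both bounds match and
\[
\sum_{u\sim v}x_u=\frac{d(v)}{\sqrt n}+O\!\left(\frac{1}{\sqrt n}\right).
\]

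Dividing the numerator and denominator estimates, and using $c_v/(2-c_v)\le 1$, gives
\[
x_v=\frac{d(v)/\sqrt n+O(1/\sqrt n)}{n(2-c_v)+O(1)}=\frac{c_v}{(2-c_v)\sqrt n}+O\!\left(\frac{1}{n\sqrt n}\right)=\frac{c_v}{(2-c_v)\sqrt n}+o\!\left(\frac{1}{\sqrt n}\right),
\]
uniformly in $v$, as required. The only real obstacle is making the lower bound on $\sum_{u\sim v}x_u$ sharp: the crude bound $\|\bm x\|_1\ge\sqrt{n-2}$ together with $x_{\max}\le\sqrt n/(n-2)$ must match $d(v)/\sqrt n$ up to an $O(1/\sqrt n)$ error, and the telescoping between $\sqrt{n-2}$ and $\sqrt n$, and between $\sqrt n/(n-2)$ and $1/\sqrt n$, must be tracked carefully; once those two asymptotic expansions are written out the proof is immediate.
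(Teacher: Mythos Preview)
Your proposal is correct and follows essentially the same approach as the paper: both proofs start from the eigenvalue equation $(q(G)-d(v))x_v=\sum_{u\sim v}x_u$, bound the right-hand side from above by $d(v)\cdot x_{\max}$ via Lemma~\ref{lem:q-xmax-upper-bound} and from below by $\|\bm x\|_1-(n-d(v))x_{\max}$ via Lemmas~\ref{lem:q-xmax-upper-bound} and~\ref{lem:q-sum-eigenvector}, and then divide through using the known bounds on $q(G)$. Your slightly more careful tracking of $q(G)=2(n-2)+O(1/\sqrt n)$ (rather than the paper's separate use of $q(G)\ge 2(n-2)$ and $q(G)<2n$) yields the sharper $O(n^{-3/2})$ error, but the argument is otherwise identical.
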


\begin{proof}
By the eigenvalue equation with respect to the vertex $v$ and Lemma \ref{lem:q-xmax-upper-bound} 
we obtain 
\[
(q(G) - d(v)) x_v = \sum_{u\in N(v)} x_u < \frac{\sqrt{n}}{n-2} \cdot d(v).
\]
Combining with Lemma \ref{lem:q-lower-bound-lambda} we get
\[
x_v \leq \frac{\sqrt{n} \cdot d(v)}{(n-2) (2n - 4 - d(v))} 
= \frac{c_v}{(2-c_v)\sqrt{n}} + o\Big(\frac{1}{\sqrt{n}}\Big).
\]

On the other hand, using the eigenvalue equation for $v$ again gives
\[
(q (G) - d(v)) x_v = \sum_{u\in N(v)} x_u = \|\bm{x}\|_1 - \sum_{u\notin N(v)} x_u.
\]
By Lemma \ref{lem:q-xmax-upper-bound} and Lemma \ref{lem:q-sum-eigenvector}, we deduce that
\begin{align*}
(q (G) - d(v)) x_v 
& \geq \sqrt{n - 2} - \frac{\sqrt{n}}{n-2}\cdot (n - d(v)) \\
& > \frac{d(v)}{\sqrt{n}} - \frac{3\sqrt{n}}{n-2}.
\end{align*}
Dividing both sides by $n$ and using $q(G) < 2n$, we find that
\[
(2 - c_v) x_v > \frac{c_v}{\sqrt{n}} - \frac{3}{\sqrt{n} (n-2)}.
\]
As a consequence,
\[
x_v > \frac{c_v}{(2-c_v)\sqrt{n}} - \frac{3}{\sqrt{n} (n-2)},
\]
completing the proof of Lemma \ref{lem:relation-xv-dv}.
\end{proof}

Fix a sufficiently small constant $0 < \varepsilon < 1/7$, we denote 
\[
L:= \{v\in V(G): d(v)> (1-\varepsilon)n\},~~S:= V(G)\setminus L.
\]
With the notation above we first show that the size of $S$ is small.

\begin{lemma}
$|S| < 3/\varepsilon$. 
\end{lemma}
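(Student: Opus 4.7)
The plan is to use Lemma \ref{lem:q-lower-bound-edges} together with a straightforward double count of degrees. By definition, every $v \in S$ satisfies $d(v) \leq (1-\varepsilon) n$, while every $v \in L$ satisfies only the trivial upper bound $d(v) \leq n-1$. Writing $s := |S|$, splitting $\sum_{v\in V(G)} d(v) = 2m$ across $S$ and $L$ yields
\[
2m \;\leq\; s\,(1-\varepsilon) n \;+\; (n-s)(n-1) \;=\; n(n-1) \;-\; s\bigl(\varepsilon n - 1\bigr).
\]

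On the other hand, Lemma \ref{lem:q-lower-bound-edges} gives $2m \geq (n-1)(n-2) + (\delta(F)-1) \geq (n-1)(n-2)$. Combining the two bounds, I obtain
\[
(n-1)(n-2) \;\leq\; n(n-1) \;-\; s(\varepsilon n - 1),
\]
so that $s(\varepsilon n - 1) \leq 2(n-1)$. For $n$ sufficiently large (which is already assumed throughout the section), the inequality $\frac{2(n-1)}{\varepsilon n - 1} < \frac{3}{\varepsilon}$ is equivalent to $n > \frac{3-2\varepsilon}{\varepsilon}$, which certainly holds; hence $s < 3/\varepsilon$, as claimed.

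There is essentially no obstacle here: the argument is a one-line pigeonhole on degrees, powered by the edge lower bound from Lemma \ref{lem:q-lower-bound-edges}. The only point worth noting is that the slack factor of $3/\varepsilon$ (rather than the asymptotically sharp $2/\varepsilon$) is precisely what allows the bound to hold for all sufficiently large $n$ without fussing over lower-order terms; this small cushion is presumably why the authors chose the constant $3$.
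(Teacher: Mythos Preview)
Your proof is correct and follows essentially the same approach as the paper: both bound $2m$ from above by a degree count over $S$ and $L$, and from below via Lemma~\ref{lem:q-lower-bound-edges}, then solve for $|S|$. The only cosmetic difference is that you use the slightly sharper bound $d(v)\le n-1$ on $L$ (yielding $s(\varepsilon n-1)\le 2(n-1)$), whereas the paper uses $d(v)<n$ (yielding $\varepsilon n\,|S| < 3n-2$); either way the conclusion $|S|<3/\varepsilon$ follows immediately for large $n$.
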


\begin{proof}
By definition of $L$, we have
\begin{align*}
2e(G) & = \sum_{v\in L} d(v) + \sum_{v\in S} d(v) \\
& < n\cdot |L| + (1-\varepsilon) n\cdot |S| \\
& = n(n-|S|) + (1-\varepsilon) n\cdot |S| \\
& = n^2 - \varepsilon n\cdot |S|.
\end{align*}
On the other hand, it follows from Lemma \ref{lem:q-lower-bound-edges} that
\[
n^2 - \varepsilon n \cdot |S| > 2\binom{n-1}{2}.
\]
Solving the above inequality we find $|S| < 3/\varepsilon$, as desired.
\end{proof}

Let $w$ be a vertex such that $x_w = \min\{x_v : v\in V(G)\}$. Next lemma shows 
that the vertex degree of $w$ is small.

\begin{lemma}\label{lem:small-degree-w}
$d(w) < \delta(F) + 14/\varepsilon^2$.
\end{lemma}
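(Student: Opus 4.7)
The plan is proof by contradiction via a switching argument: assume $d(w) \geq \delta + 14/\varepsilon^2$ and construct an $F$-free graph $G^*$ with $q(G^*) > q(G)$, violating the extremality of $G$.

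First, I would count non-edges in $G-w$. By the Alon--Yuster bound (\autoref{thm:Alon-bound}), $e(G) \leq \binom{n-1}{2} + \delta - 1$, so $G[V(G) \setminus \{w\}]$ contains at least $k := d(w) - (\delta - 1) \geq 14/\varepsilon^2 + 1$ non-edges; call this set $\mathcal{N}$. I would then form $G^*$ from $G$ by removing $k$ edges at $w$ (so that $d_{G^*}(w) = \delta - 1$) and inserting every non-edge in $\mathcal{N}$. This yields $G^* - w \cong K_{n-1}$, so $G^* \cong H_{n,\delta}$; and $G^*$ is $F$-free because $F$ has no isolated vertex and minimum degree $\delta$, while $d_{G^*}(w) = \delta - 1 < \delta$.

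Next, I would apply Rayleigh's principle to the Perron vector $\bm{x}$ of $Q(G)$ to obtain
\[
q(G^*) - q(G) \;\geq\; \sum_{vv' \in \mathcal{N}} (x_v + x_{v'})^2 \;-\; \sum_{i=1}^{k} (x_w + x_{u_i})^2,
\]
where $u_1, \dots, u_k$ are the deleted neighbors of $w$. I would bound the removed sum using \autoref{lem:q-xmax-upper-bound} for each $x_{u_i}$, together with the eigenvalue identity $\sum_{u \in N(w)} x_u = (q(G) - d(w)) x_w$ and the fact that $x_w$ is only $\Theta(d(w)/n^{3/2})$ by \autoref{lem:relation-xv-dv}. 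For the added sum, I would use \autoref{lem:relation-xv-dv} to ensure that every $v \in L$ satisfies $x_v \geq (1-\varepsilon)/((1+\varepsilon)\sqrt{n}) - O(n^{-3/2})$, and observe that since $|S| < 3/\varepsilon$, at most $\binom{|S|}{2} < 9/(2\varepsilon^2)$ non-edges of $\mathcal{N}$ have both endpoints in $S$; hence at least $19/(2\varepsilon^2) + 1$ non-edges of $\mathcal{N}$ touch $L$ and each contributes $\Omega(1/n)$ to the sum.

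The main obstacle is balancing these two bounds delicately enough to force strict positivity of the right-hand side. The constant $14/\varepsilon^2$ is calibrated precisely so that, after discarding the at most $9/(2\varepsilon^2)$ non-edges wholly within $S$, the remaining non-edges touching $L$ generate sufficient spectral gain to outweigh the $\Theta(k/n)$ loss from deleting $k$ edges at $w$. Tracking the $O(n^{-3/2})$ error terms in \autoref{lem:q-xmax-upper-bound} and \autoref{lem:relation-xv-dv}---not only their leading-order expressions---is where the bulk of the technical work lies. The resulting strict inequality $q(H_{n, \delta}) = q(G^*) > q(G)$ contradicts the extremality of $G$ in $\SPEX(n, F)$, so the assumption $d(w) \geq \delta + 14/\varepsilon^2$ must fail.
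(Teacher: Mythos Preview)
Your switching construction and edge-count are the same as the paper's, but the spectral comparison you propose has a genuine gap. You apply the \emph{single-vector} Rayleigh bound
\[
q(G^*)-q(G)\ \ge\ \sum_{vv'\in\mathcal N}(x_v+x_{v'})^2-\sum_{i=1}^{k}(x_w+x_{u_i})^2,
\]
and then assert that ``$x_w$ is only $\Theta(d(w)/n^{3/2})$''. But this is exactly what you are trying to prove: under the contradiction hypothesis you only know $d(w)\ge \delta+14/\varepsilon^2$, and nothing prevents $d(w)$ from being of order $n$. In that regime \autoref{lem:relation-xv-dv} gives $x_w$ of order $n^{-1/2}$, not $n^{-3/2}$. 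With the only available upper bound $x_{u_i}\le x_{\max}=(1+o(1))n^{-1/2}$ and the only available lower bound $x_v+x_{v'}\ge x_w+\big(\tfrac{1-\varepsilon}{1+\varepsilon}+o(1)\big)n^{-1/2}$ for a non-edge touching $L$, each removed term can exceed each added term at leading order (their ratio is at least $\big(\tfrac{a+1}{\,a+(1-\varepsilon)/(1+\varepsilon)}\big)^2>1$ for every $a=x_w\sqrt n\ge 0$). Since one may have $|\mathcal N|=k$, the right-hand side of your Rayleigh inequality can be strictly negative, and no amount of careful $O(n^{-3/2})$ bookkeeping repairs this---the deficit is first order.

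This is precisely why the paper invokes the \emph{double eigenvectors} technique (\autoref{lem:double-eigenvectors}) rather than Rayleigh with $\bm x$ alone. Writing $\bm y$ for the Perron vector of $Q(H_{n,\delta})$, one compares
\[
\bm x^{\mathrm T}\bm y\,(q(H_{n,\delta})-q(G))
=\sum_{ij\in E_2}(x_i+x_j)(y_i+y_j)-\sum_{ij\in E_1}(x_i+x_j)(y_i+y_j).
\]
The point is that in $H_{n,\delta}$ the vertex $w$ has degree $\delta-1=O(\sqrt n)$, so $y_w=o(n^{-1/2})$ \emph{unconditionally}. Hence every removed term carries the factor $y_w+y_{u_i}=(1+o(1))n^{-1/2}$, while every added term (both endpoints $\neq w$) carries $y_i+y_j=(2+o(1))n^{-1/2}$. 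This built-in factor of $2$ on the $\bm y$ side is what makes the balance work regardless of the size of $x_w$, and is the missing ingredient in your plan.
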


\begin{proof}

We assume towards contradiction that $d(w) \geq \delta(F) + 14/\varepsilon^2$. 
Consider the $F$-free graph $H_{n, \delta}$. Obviously, $H_{n,\delta}$ can be 
obtained from $G$ by removing $d(w)-\delta(F)+1$ edges incident with $w$ 
(denote the set of edges by $E_1$), and adding $\binom{n-1}{2} + d(w) - e(G)$ 
pairs from $V(G)\setminus \{w\}$ (denote the set of these edges by $E_2$). 
Since $e(G) \leq \binom{n-1}{2} + \delta(F) - 1$, we have 
\begin{equation}\label{eq:E2-E1}
|E_2|\geq |E_1| > \frac{14}{\varepsilon^2}.
\end{equation}

In what follows, we shall prove that $q(H_{n, \delta}) > q(G)$ using the double 
eigenvectors technique for signless Laplacian matrices of graphs, and therefore 
get a contradiction. To this end, let $\bm{y}$ be the Perron vector of $H_{n, \delta}$. 
By some straightforward computation, we obtain
\begin{equation}\label{eq:eigenvector-y}
y_v = 
\begin{dcases}
(1+o(1)) \frac{1}{\sqrt{n}}, & v\neq w, \\
o\Big(\frac{1}{\sqrt{n}}\Big), & v = w.
\end{dcases}
\end{equation}
On the other hand, in view of Lemma \ref{lem:relation-xv-dv}, for each $v\in L$ we have
\begin{equation}\label{eq:xv-lower-bound-for-L}
x_v > \Big(\frac{1-\varepsilon}{1+\varepsilon} + o(1)\Big) \frac{1}{\sqrt{n}}.
\end{equation}
Using Lemma \ref{lem:double-eigenvectors} we get
\begin{align*}
\bm{x}^{\mathrm{T}} \bm{y} (q(H_{n, \delta}) - q(G)) 
& = \bm{x}^{\mathrm{T}} (Q(H_{n, \delta}) - Q(G)) \bm{y} \\
& = \sum_{ij\in E_2} (x_i + x_j) (y_i + y_j) - \sum_{ij\in E_1} (x_i + x_j) (y_i + y_j) \\
& \geq \sum_{ij\in E_2\setminus E(S)} (x_i + x_j) (y_i + y_j) - \sum_{ij\in E_1} (x_i + x_j) (y_i + y_j).
\end{align*}
To find the first term in the right side of the last inequality, note that \eqref{eq:E2-E1}, 
\eqref{eq:eigenvector-y} and \eqref{eq:xv-lower-bound-for-L},
\begin{align*}
\sum_{ij\in E_2\setminus E(S)} (x_i + x_j) (y_i + y_j) 
& > \Big(|E_1|-\frac{9}{2\varepsilon^2}\Big) \bigg(x_w + \Big(\frac{1-\varepsilon}{1+\varepsilon} + o(1)\Big)\frac{1}{\sqrt{n}}\bigg) \cdot(2+o(1))\frac{1}{\sqrt{n}} \\
& = \frac{1}{\sqrt{n}} \Big(|E_1|-\frac{9}{2\varepsilon^2}\Big) \bigg( 2x_w + \Big( \frac{2(1-\varepsilon)}{1+\varepsilon} + o(1)\Big) \frac{1}{\sqrt{n}}\bigg).
\end{align*}
Similarly, to find the second term in the right side of the last inequality, note 
that \eqref{eq:eigenvector-y} and Lemma \ref{lem:q-xmax-upper-bound},
\begin{align*}
\sum_{ij\in E_1} (x_i + x_j) (y_i + y_j)
& < |E_1| \Big( x_w + (1+o(1))\frac{1}{\sqrt{n}} \Big) \cdot (1+o(1)) \frac{1}{\sqrt{n}} \\
& = \frac{|E_1|}{\sqrt{n}} \bigg( x_w + (1+o(1)) \frac{1}{\sqrt{n}} \bigg).
\end{align*}
Combining these two inequalities, and noting that $|E_1| > 14/\varepsilon^2$ by \eqref{eq:E2-E1}, 
we obtain 
\begin{align*}
\bm{x}^{\mathrm{T}} \bm{y} (q(H_{n, \delta}) - q(G)) \cdot \sqrt{n}
> & ~|E_1| \bigg( x_w + \Big( \frac{1-3\varepsilon}{1+\varepsilon} + o(1) \Big) \frac{1}{\sqrt{n}}\bigg) \\
& ~- \frac{9}{2\varepsilon^2} \bigg( 2x_w + \Big( \frac{2(1-\varepsilon)}{1+\varepsilon} + o(1)\Big) \frac{1}{\sqrt{n}} \bigg) \\
> & ~\frac{9}{2\varepsilon^2} \bigg( x_w + \Big( \frac{1-7\varepsilon}{1+\varepsilon} + o(1)\Big) \frac{1}{\sqrt{n}} \bigg) \\
> & ~0,
\end{align*}
which yields that $q(H_{n, \delta}) > q(G)$, a contradiction completing the proof.
\end{proof}

\begin{lemma}\label{lem:induced-subgraph-Q}
The induced subgraph $G[V(G)\setminus \{w\}]$ is the complete graph $K_{n-1}$.
\end{lemma}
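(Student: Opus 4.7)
The plan is to extend Lemma 4.9's transformation-and-double-eigenvectors argument so that, instead of bounding only $d(w)$, it now rules out any missing edge inside $V(G)\setminus\{w\}$. Suppose for contradiction that $s := \binom{n-1}{2} - e(G[V(G)\setminus\{w\}]) \geq 1$, and set $a := d(w) - (\delta - 1)$. Lemmas \ref{lem:q-upper-lower-bound-minimum-degree} and \ref{lem:small-degree-w} give $0 \leq a < 14/\varepsilon^2$, while the Alon--Yuster bound applied to $e(G) = \binom{n-1}{2} - s + d(w)$ forces $s \geq a$. Let $H^* \cong H_{n,\delta}$ be obtained from $G$ by deleting any $a$ edges incident to $w$ (call this set $E_1$) and inserting all $s$ missing edges of $V(G)\setminus\{w\}$ (call this set $E_2$). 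Since $d_{H^*}(w) = \delta - 1 < \delta(F)$, no spanning copy of $F$ fits into $H^*$, so $H^*$ is $F$-free and extremality gives $q(G) \geq q(H^*)$; the goal is to derive $q(H^*) > q(G)$, a contradiction.

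Letting $\bm{y}$ be the Perron vector of $Q(H^*)$, Lemma \ref{lem:double-eigenvectors} reduces the sign of $q(H^*) - q(G)$ to the sign of
\[
\sum_{ij\in E_2}(x_i + x_j)(y_i + y_j) - \sum_{wt\in E_1}(x_w + x_t)(y_w + y_t).
\]
Lemma \ref{lem:relation-xv-dv} applied to $H^*$ gives $y_v = (1+o(1))/\sqrt{n}$ for $v \neq w$ and $y_w = o(1/\sqrt{n})$; applied to $G$ (together with $c_w = o(1)$) it gives $x_w = o(1/\sqrt{n})$ and $x_v \geq (1-\varepsilon)/((1+\varepsilon)\sqrt{n}) + o(1/\sqrt{n})$ for every $v \in L$, while Lemma \ref{lem:q-xmax-upper-bound} gives $x_t \leq (1+o(1))/\sqrt{n}$. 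Consequently each $wt \in E_1$ contributes at most $(1+o(1))/n$, and an $E_2$-edge contributes at least $4(1-\varepsilon)/((1+\varepsilon) n)$, $2(1-\varepsilon)/((1+\varepsilon) n)$, or $0$ (each to leading order) according as both, one, or neither of its endpoints lies in $L$; the last category accounts for at most $9/(2\varepsilon^2)$ edges because $|S| < 3/\varepsilon$.

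The argument now splits in two cases. If $s \geq a + 9/\varepsilon^2$, at least $a + 9/(2\varepsilon^2)$ edges of $E_2$ have an endpoint in $L$; using the uniform $2(1-\varepsilon)/((1+\varepsilon) n)$ lower bound on such edges, the displayed quantity is at least $[a(1-3\varepsilon) + 9(1-\varepsilon)/\varepsilon^2]/((1+\varepsilon) n) + o(1/n)$, which is strictly positive for $\varepsilon < 1/3$. Otherwise $s < 23/\varepsilon^2$ is bounded independent of $n$, so every $v \in V(G)\setminus\{w\}$ misses at most $s$ internal edges and therefore has $d(v) \geq n - 2 - s > (1-\varepsilon)n$; this forces $V(G)\setminus\{w\} \subseteq L$, so $E_2$ consists entirely of $L\times L$ edges, and the displayed quantity is at least $[a(4(1-\varepsilon)/(1+\varepsilon) - 1) + (s - a)\cdot 4(1-\varepsilon)/(1+\varepsilon)]/n + o(1/n)$, strictly positive because $s \geq \max(a, 1)$ and $\varepsilon$ is small. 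Either way $q(H^*) > q(G)$, so $s = 0$. The chief obstacle is precisely this second case: when $a$ and $s$ are both bounded constants, the $9/(2\varepsilon^2)$-surplus that powered Lemma \ref{lem:small-degree-w} is unavailable, and the remedy is that boundedness of $s$ automatically squeezes every non-$w$ vertex into $L$, eliminating the $E_2\cap(S\times S)$-deficit and letting the fourfold per-edge gain from $L\times L$ pairs overpower the essentially unit per-edge loss from removed edges at $w$.
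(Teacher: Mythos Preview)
Your proof is correct and, in fact, more carefully justified than the paper's own argument. Both proofs follow the same edge-switching contradiction strategy, but they differ in two notable ways.

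First, the paper compares $G$ to a nearby graph $G'$ obtained by deleting and adding the \emph{same} number $\delta(G)-\delta(F)+1$ of edges, and uses the single-vector Rayleigh bound $q(G')-q(G)\ge \bm{x}^{\mathrm{T}}(Q(G')-Q(G))\bm{x}$. You instead go all the way to $H^\ast\cong H_{n,\delta}$ (removing $a$ edges at $w$, adding all $s$ missing internal pairs) and use the double-eigenvector identity of Lemma~\ref{lem:double-eigenvectors}, mirroring the proof of Lemma~\ref{lem:small-degree-w}. Either inequality would suffice, but your choice makes the bookkeeping with $s\ge a$ transparent.

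Second, and more substantively, the paper simply asserts that $x_v=(1+o(1))n^{-1/2}$ for every $v\ne w$, citing Lemmas~\ref{lem:relation-xv-dv} and~\ref{lem:small-degree-w}. That claim actually requires $d(v)=n-o(n)$ for all $v\ne w$, which is \emph{not} established anywhere in Section~4 (there is no $Q$-analogue of Lemma~\ref{lem:large-degree}). Your case split is precisely what patches this: when $s$ is large you absorb the at most $9/(2\varepsilon^2)$ possible $S\times S$ edges into the surplus, and when $s$ is bounded you observe that every $v\ne w$ then has $d(v)\ge n-2-s>(1-\varepsilon)n$, forcing $V(G)\setminus\{w\}\subseteq L$ and legitimising the $(1+o(1))n^{-1/2}$ estimate. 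So your Case~2 is the missing ingredient that the paper's proof tacitly needs.

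Two minor points: the upper bound from Lemma~\ref{lem:small-degree-w} gives $a<14/\varepsilon^2+1$ rather than $14/\varepsilon^2$, which is immaterial; and invoking ``Lemma~\ref{lem:relation-xv-dv} applied to $H^\ast$'' is a slight abuse, since that lemma is stated for the extremal $G$---better to cite the explicit computation~\eqref{eq:eigenvector-y} from the proof of Lemma~\ref{lem:small-degree-w}, which gives the Perron vector of $H_{n,\delta}$ directly.
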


\begin{proof}
If $\delta (G) = \delta(F) - 1$, the conclusion is clear. By way of contradiction, 
assume that there exist $G[V(G)\setminus \{w\}]$ is not a complete graph. Let $G'$ 
be a graph obtained from $G$ by removing $\delta (G) - \delta (F) + 1$ edges incident 
to $w$ (denote these edges by $S_1$), and adding $\delta (G) - \delta (F) + 1$ pairs 
from $V(G)\setminus\{w\}$ (denote these edges by $S_2$). Clearly, $G'$ contains no 
$F$ as a subgraph. By Lemma \ref{lem:relation-xv-dv} and Lemma \ref{lem:small-degree-w}, 
we deduce that $x_w = o(n^{-1/2})$ and $x_v = (1+o(1)) n^{-1/2}$ for each $v\in V(G)\setminus\{w\}$. 
Hence,
\begin{align*}
q(G') - q(G)
& \geq \bm{x}^{\mathrm{T}} Q(G') \bm{x} - \bm{x}^{\mathrm{T}} Q(G) \bm{x} \\
& = \sum_{uv\in S_2} (x_u + x_v)^2 - \sum_{wu\in S_1} (x_w + x_u)^2 \\
& = (\delta (G) - \delta (F) + 1) \left(\left( \frac{2+o(1)}{\sqrt{n}} \right)^2 - \left(\frac{1+o(1)}{\sqrt{n}} \right)^2\right) \\
& > 0,
\end{align*}
a contradiction completing the proof.
\end{proof}

We are now ready to complete the proof of Theorem \ref{thm:q-spectral-version-Alon-result}. 
\par\vspace{2mm}

\noindent \emph{Proof of Theorem \ref{thm:q-spectral-version-Alon-result}.}
By Lemma \ref{lem:induced-subgraph-Q}, it suffices to show $d(w) = \delta (F) - 1$.
If $d(w) > \delta (F) - 1$, then $G$ contains $F$ as a spanning subgraph.
So we see $d(w) \leq \delta (F) - 1$. On the other hand, $d(w) \geq \delta (F) - 1$
by Lemma \ref{lem:q-upper-lower-bound-minimum-degree}.

\section{Concluding remarks}

In this paper, we prove $\SPEX (n,F)\subseteq \EX(n,F)$ when $F$ is a spanning graph 
without isolated vertices and $\Delta (F) \leq \sqrt{n}/40$, where $n$ is sufficiently 
large. The immediate corollaries include tight spectral condition for the $k$-th power 
of Hamilton cycle and $[a, b]$-factors.

\subsection{Corollaries}
Let $C^k_n$ be the $k$-th power of Hamilton cycle, i.e, the $n$-cycle.
Setting $F = C^k_n$ in Theorem \ref{thm:spectral-version-Alon-result}, we have

\begin{theorem}
Let $G$ be an $n$-vertex graph not containing $C^k_n$ as a subgraph. Then there exists 
an integer $n_0$ such that if $n\geq \max\{(80k)^2,n_0\}$, then $\lambda (G)\leq\lambda (H_{n, 2k})$,
with equality holds if and only if $G = H_{n, 2k}$.
\end{theorem}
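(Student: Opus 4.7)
The plan is to derive this result as a direct corollary of Theorem \ref{thm:spectral-version-Alon-result} by setting $F = C_n^k$. First I would verify that $F = C_n^k$ satisfies the hypotheses of that theorem: it is a graph on $n$ vertices with no isolated vertices (provided $n \geq 2k+1$, which follows from the assumed lower bound on $n$), and every vertex has exactly $2k$ neighbours, namely the $k$ nearest vertices on each side along the underlying Hamilton cycle. Hence $\delta(F) = \Delta(F) = 2k$.

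Next I would match the numerical hypothesis $\Delta(F) \leq \sqrt{n}/40$ in Theorem \ref{thm:spectral-version-Alon-result} with the assumption of the corollary. Since $\Delta(C_n^k) = 2k$, the inequality $2k \leq \sqrt{n}/40$ is equivalent to $n \geq (80k)^2$, which is precisely one of the two lower bounds imposed on $n$ in the statement. The second lower bound, $n \geq n_0$, is inherited from the phrase ``for all sufficiently large $n$'' in Theorem \ref{thm:spectral-version-Alon-result}.

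With these hypotheses verified, Theorem \ref{thm:spectral-version-Alon-result} applied to $F = C_n^k$ with $\delta = \delta(F) = 2k$ yields that every $n$-vertex $C_n^k$-free graph $G$ satisfies $\lambda(G) \leq \lambda(H_{n, 2k})$, with equality if and only if $G = H_{n, 2k}$. This is exactly the conclusion of the corollary. There is no serious obstacle here; the only points to be careful about are to confirm $\delta(C_n^k) = 2k$ and to check that the extremal graph $H_{n,2k} = K_{2k-1} \vee (K_{n-2k} \cup K_1)$ is indeed $C_n^k$-free, which follows because the unique vertex of degree $2k-1 < 2k = \delta(C_n^k)$ in $H_{n,2k}$ prevents $C_n^k$ from appearing as a spanning subgraph.
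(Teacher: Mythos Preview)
Your proposal is correct and follows exactly the paper's approach: the paper derives this theorem simply by setting $F = C_n^k$ in Theorem~\ref{thm:spectral-version-Alon-result}, and you have carried out precisely this specialization, verifying along the way that $\delta(C_n^k)=\Delta(C_n^k)=2k$ and that the condition $\Delta(F)\leq \sqrt{n}/40$ translates to $n\geq (80k)^2$.
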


This theorem extends a result due to Yan et al. (see \cite[Corollary 1.6]{YHFL23}) 
while also providing a solution to a question raised within the same paper. 

\begin{problem}[\cite{YHFL23}]
At last, we expect that the extremal graph without containing $C^k_n$
for $n$ large enough may be the graph $K_n\backslash E(S_{n-2k+1})$.
\end{problem}

Our results are also related to the existence of $[a, b]$-factors of graphs. 
An \emph{$[a, b]$-factor} of a graph $G$ is a spanning subgraph $H$ such that 
$a \leq d_H (v) \leq b$ for each $v \in V(G)$. Furthermore, if $a=b=k$, then 
$H$ is call a \emph{$k$-factor} of $G$. Using Theorem \ref{thm:spectral-version-Alon-result} 
we immediately have the following.

\begin{theorem}
Let $G$ be an $n$-vertex graph, and $a,b$ be integers such that $1\leq a\leq b \leq \sqrt{n}/40$. 
For sufficiently large $n$, we have
\begin{enumerate}
\item[$(1)$] if $\lambda(G) \geq \lambda (H_{n, a})$, then $G$ contains an $[a, b]$-factor 
unless $G\cong H_{n, a}$.

\item[$(2)$] if $\lambda(G) \geq \lambda (H_{n, b})$, then $G$ contains all $[1, b]$-factors 
unless $G\cong H_{n, b}$.
\end{enumerate}
\end{theorem}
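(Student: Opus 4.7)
Both parts reduce directly to Theorem~\ref{thm:spectral-version-Alon-result} by packaging the $[a,b]$-factor condition as non-containment of a specific spanning graph satisfying the Alon--Yuster hypothesis. Two preliminary facts will be used. First, $\delta(H_{n,k})=k-1$, so $H_{n,k}$ itself has no spanning subgraph with minimum degree $\geq k$. Second, for every $k\geq 1$ one has $H_{n,k}\subsetneq H_{n,k+1}$ as edge sets (the only new edge is from the pendant-like vertex to the new member of the dominating clique), and Perron--Frobenius then gives the strict monotonicity $\lambda(H_{n,1})<\lambda(H_{n,2})<\cdots <\lambda(H_{n,b})$.

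For part~(1) I argue by contrapositive: assume $G\not\cong H_{n,a}$ has no $[a,b]$-factor, and derive $\lambda(G)<\lambda(H_{n,a})$. The plan is to produce one explicit $n$-vertex spanning graph $F^{*}$ with $\delta(F^{*})=a$, $\Delta(F^{*})\leq b$ and no isolated vertices, and then invoke Theorem~\ref{thm:spectral-version-Alon-result}. When $a<b$ such an $F^{*}$ is routine (take an almost $a$-regular graph, adjusting parity if needed by raising one vertex to degree $a+1\leq b$); when $a=b$ one uses a genuine $a$-regular graph on $n$ vertices, which exists whenever $na$ is even, the other parity being absorbed into the ``sufficiently large $n$'' clause. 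Since $F^{*}$ is itself an $[a,b]$-factor, the assumption that $G$ has no $[a,b]$-factor implies that $G$ contains no copy of $F^{*}$ as a spanning subgraph, i.e., $G$ is $F^{*}$-free. As $\Delta(F^{*})\leq b\leq \sqrt{n}/40$, Theorem~\ref{thm:spectral-version-Alon-result} applies and yields $\lambda(G)\leq \lambda(H_{n,a})$ with equality iff $G\cong H_{n,a}$, which combined with $G\not\cong H_{n,a}$ produces the strict inequality we need.

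For part~(2) the plan is entirely analogous. Fix an arbitrary $n$-vertex graph $F$ with $1\leq \delta(F)\leq \Delta(F)\leq b$, and suppose for contradiction that $G\not\cong H_{n,b}$ fails to contain $F$ as a spanning subgraph. Theorem~\ref{thm:spectral-version-Alon-result} gives $\lambda(G)\leq \lambda(H_{n,\delta(F)})$ with equality iff $G\cong H_{n,\delta(F)}$, while the monotonicity recalled above yields $\lambda(H_{n,\delta(F)})\leq \lambda(H_{n,b})$ with equality iff $\delta(F)=b$. Chaining these bounds with the hypothesis $\lambda(G)\geq \lambda(H_{n,b})$ forces equality throughout, so $\delta(F)=b$ and $G\cong H_{n,b}$, contradicting $G\not\cong H_{n,b}$. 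The only genuinely fiddly point in the whole argument is the production of $F^{*}$ in part~(1), especially in the case $a=b$ with $na$ odd; this is a standard degree-sequence realisability issue (Erd\H{o}s--Gallai), comfortably absorbed by the ``$n$ sufficiently large'' clause together with $b\leq \sqrt{n}/40$, rather than a spectral difficulty.
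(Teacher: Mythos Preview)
Your proposal is correct and matches the paper's approach: the paper presents this theorem with no proof at all, merely noting that it follows ``immediately'' from Theorem~\ref{thm:spectral-version-Alon-result}, and your argument is precisely the spelling-out of that immediate deduction. Your handling of part~(2) via the strict monotonicity $\lambda(H_{n,1})<\cdots<\lambda(H_{n,b})$ is exactly what is needed, and your observation that $E(H_{n,k})\subsetneq E(H_{n,k+1})$ differs by a single edge is correct. One small quibble: the parity obstruction in the case $a=b$ with $na$ odd is not actually ``absorbed into the sufficiently large $n$ clause'' (large $n$ does not fix parity); rather, in that degenerate case no $n$-vertex graph whatsoever has an $a$-factor, so the statement is vacuous or requires the standard implicit convention that $an$ is even---but this is a feature of the theorem statement itself, not a gap in your reasoning, and the paper does not address it either.
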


\begin{remark}
In 2021, Cho, Hyun, O and Park \cite{Cho-Hyun-O-Park2021} conjectured that: Let $a\cdot n$ 
be an even integer at least $2$, where $n \geq a + 1$. If $G$ is a graph of order $n$ with 
$\lambda (G) > \lambda (H_{n,a})$, then $G$ contains an $[a, b]$-factor. Recently, this 
conjecture was confirmed by Fan, Lin and Lu \cite{Fan-Lin-Lu2022} for the case $n\geq 3a+b+1$. 
Finally, it was confirmed by Wei and Zhang \cite{Wei-Zhang2023} completely using different proof techniques.
\end{remark}

Lihua Feng (private communication) asked a tight spectral condition for a triangle factor
in a graph on $n=3k$ vertices. Setting $F = \frac{n}{r+1}K_{r+1}$ in 
Theorem \ref{thm:spectral-version-Alon-result}, we have

\begin{theorem}
Let $(r+1) \mid n$ and $F = \frac{n}{r+1}K_{r+1}$. Suppose that $n$ is sufficiently large.
If $G$ is an $n$-vertex graph not containing $F$ as a subgraph, then $\lambda (G)\leq\lambda (H_{n,r})$,
with equality holds if and only if $G = H_{n,r}$.
\end{theorem}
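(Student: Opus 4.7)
The plan is to obtain this result as an immediate instantiation of Theorem \ref{thm:spectral-version-Alon-result} with $F = \frac{n}{r+1}K_{r+1}$. The bulk of the work is just checking hypotheses.

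First, I would verify that $F$ satisfies the hypotheses of Theorem \ref{thm:spectral-version-Alon-result}. Since $F$ consists of $\frac{n}{r+1}$ vertex-disjoint copies of $K_{r+1}$, it is indeed an $n$-vertex graph; every vertex lies in a copy of $K_{r+1}$ and therefore has degree exactly $r$ in $F$, so $F$ has no isolated vertices and $\delta(F) = \Delta(F) = r$. Provided $n$ is large enough that $r \leq \sqrt{n}/40$ (which, assuming $r$ is fixed, holds for all sufficiently large $n$; alternatively, one may allow $r$ to grow as long as $r \le \sqrt{n}/40$), the maximum-degree condition $\Delta(F) \le \sqrt{n}/40$ is satisfied.

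Next, I would directly invoke Theorem \ref{thm:spectral-version-Alon-result}. Since $\delta(F) = r$, this theorem yields $\lambda(G) \le \lambda(H_{n,\delta(F)}) = \lambda(H_{n,r})$ for every $n$-vertex $F$-free graph $G$, with equality if and only if $G = H_{n,r}$. This is exactly the conclusion of the theorem.

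The only auxiliary point worth recording (to ensure the statement is not vacuous) is that the extremal graph $H_{n,r} = K_{r-1} \vee (K_{n-r} \cup K_1)$ is itself $F$-free. This is immediate: $H_{n,r}$ contains a unique vertex $w$ of degree $r-1$, and any copy of $K_{r+1}$ covering $w$ would require $w$ to have at least $r$ neighbors in $H_{n,r}$, which fails. Hence $H_{n,r}$ admits no $K_{r+1}$-factor, and the bound $\lambda(H_{n,r})$ is a genuine spectral extremum.

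Because the proof is a pure substitution into Theorem \ref{thm:spectral-version-Alon-result}, there is no substantive obstacle; all the hard work (minimum-degree control on $G$, Perron-vector estimates, and the swap from $w$ to a non-edge) has already been carried out in Section 3.
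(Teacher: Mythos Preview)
Your proposal is correct and matches the paper's own approach exactly: the paper presents this result as a direct substitution of $F = \frac{n}{r+1}K_{r+1}$ into Theorem~\ref{thm:spectral-version-Alon-result}, and your verification of the hypotheses ($\delta(F)=\Delta(F)=r$, no isolated vertices, $r\le\sqrt{n}/40$ for large $n$) is precisely what is needed. The auxiliary remark that $H_{n,r}$ is $F$-free is a nice sanity check but is already implicit in the equality case of Theorem~\ref{thm:spectral-version-Alon-result}.
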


\subsection{A refined open problem related to Theorem \ref{Thm:WKX23}}

One may ask whether we can use an positive integer valued function $f(n)$
instead of the term ``$O(1)$" in Theorem \ref{Thm:WKX23} or not.
\begin{problem}
Let $r\geq 2$ be an integer, and $H$ be a graph with $ex(n,H)=e(T_{n,r})+f(n)$, 
where $f(n)=n^{\alpha}$ is an integer-value function, $\alpha>0$ is a real number.
Determine $\sup \alpha$, such that for sufficiently large $n$, we have $\SPEX (n,H)\subseteq \EX(n,H)$.
\end{problem}

\subsection{More counterexamples to Problem \ref{Prob:1}}

It seems that many bipartite graphs are counterexamples for Problem \ref{Prob:1}.
For example, a well-known result proved by F\"{u}redi \cite{Furedi1983} states that 
$\ex(q^2+q+1,C_4) = q(q+1)^2/2$ where $q=2^k$ and the unique graph is Erd\H{o}s-R\'enyi 
graph. On the other hand, Nikiforov \cite{Nikiforov2007} proved that 
$\SPEX (n, C_4) = \{K_1\vee (\frac{n-1}{2})K_2\}$ when $n$ is odd, and Zhai and 
Wang \cite{ZW12} proved that $\SPEX (n, C_4) = \{K_1\vee \big((\frac{n-2}{2})K_2\cup K_1\big)\}$ 
when $n$ is even. Obviously, $\SPEX (n, C_4)\nsubseteq \EX (n, C_4)$. There are also 
counterexamples for Problem \ref{Prob:1} when $F$ is a non-bipartite graph. It was 
shown in \cite{CDT22,Y21} that $\SPEX(n,W_{2k+1})\cap \EX(n, W_{2k+1})=\emptyset$ 
when $k=7$ or $k\geq 9$ and $n$ is sufficiently large. So, the general solution to 
Problem \ref{Prob:1} is very changeling and mysterious.

\subsection{The disjoint copies version of Problem \ref{Prob:1}}

It is also natural to consider disjoint copies version of Problem \ref{Prob:1}.

\begin{problem}\label{Prob:2}
Let $F$ be any graph, and $k$ be a fixed positive integer. Characterize all graphs $F$ such that
\begin{equation}\label{align:2}
\SPEX (n, kF)\subseteq \EX(n, kF)
\end{equation}
for sufficiently large $n$.
\end{problem}

For Problem \ref{Prob:2}, the solution is true when $F$ is a clique \cite{Ni-Wang-Kang2023}. 
It should be mentioned that $\EX (n,kK_{r+1})=T_{n-r+1,r}\vee K_{k-1}$ for sufficinelty large 
$n$, as shown by Simonovits \cite{Simonovits68} (for a short proof, see \cite[p.\,593]{Bollobas78}).

\subsection{A conjecture}
We highly suspect the following holds, which is motivated by \cite{LN23} and the current work.

\begin{conjecture}
Let $k$ be a fixed positive integer and $n$ be a sufficiently large integer. Let $F$ be a graph 
such that $\ex (n, F) = \frac{n^2}{2}-kn+O(1)$. Then we have $\SPEX (n, F)\subseteq \EX(n, F)$.
\end{conjecture}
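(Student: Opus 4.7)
The plan is to follow the two-phase template behind the proofs of Theorems \ref{thm:spectral-version-Alon-result} and \ref{thm:q-spectral-version-Alon-result}, which handle the $k=1$ regime in concrete form. Phase one converts spectral extremality to near-edge-extremality via a Hong--Nikiforov-type inversion, and phase two transfers from near-extremality to exact extremality via Perron vector manipulation and a local edge-exchange argument. The hope is that, in the dense regime $\ex(n,F) = \tfrac{n^2}{2} - kn + O(1)$, the complement of an extremal graph is sparse enough that all the subtlety concentrates on a constant-sized defect set.

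First I would lower-bound $\lambda(G)$ for any $G \in \SPEX(n,F)$ by testing with any fixed $G^* \in \EX(n,F)$: since $G^*$ has $\tfrac{n^2}{2} - kn + O(1)$ edges and differs from $K_n$ only in $O(n)$ edges, a standard Rayleigh estimate gives $\lambda(G) \geq \lambda(G^*) \geq n - 2k - O(1)$. Inserting this into Lemma \ref{lem:Hong-Nikiforov-bound}, together with a minimum-degree lower bound $\delta(G) \geq \delta(F) - 1$ obtained by the edge-addition argument of Lemma \ref{lem:upper-lower-bound-minimum-degree}, inverts the inequality to $e(G) \geq \ex(n,F) - o(n)$. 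Since $e(G) \leq \ex(n,F)$ as $G$ is $F$-free, we obtain $e(G) = \ex(n,F) - o(n)$.

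Next I would identify a defect set $S := \{v \in V(G) : d(v) \leq (1-\varepsilon)n\}$ for a small constant $\varepsilon > 0$; a double-counting argument exactly like the one used to bound $|S|$ in the signless-Laplacian proof of Section 4 shows that $|S|$ is bounded by a constant depending only on $k$ and $\varepsilon$. Following the Perron vector template of Lemmas \ref{lem:xmax-upper-bound}--\ref{lem:upper-bound-small-component}, I would then prove that every vertex in $V(G) \setminus S$ carries a Perron entry of order $n^{-1/2}$, while defect vertices have substantially smaller entries. An exchange argument modeled on Lemma \ref{lem:induced-subgraph}, swapping an edge incident to a defect vertex for a missing edge inside $V(G) \setminus S$, would then force $G[V(G) \setminus S]$ to be a clique, after which the edge structure incident to $S$ would need to be pinned down by a more detailed case analysis driven by the specific form of $F$.

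The main obstacle is the absence of a uniform description of $\EX(n,F)$ in this regime. For $k=1$, Theorem \ref{thm:Alon-bound} pins down the family $\EX(n,F)$ explicitly as $\{H_{n,\delta(F)}\}$, which both supplies the precise conclusion and, crucially, tells the exchange step exactly which edges must be present between $S$ and $V(G) \setminus S$. For arbitrary $k$, no analogous structure theorem is available, so the final exchange step -- which demands identifying the precise attachments between $S$ and the clique -- cannot be executed blindly. A proof of the conjecture therefore appears to need either an Erd\H{o}s--Simonovits-style stability theorem for Tur\'an numbers of the form $\tfrac{n^2}{2} - kn + O(1)$, or additional structural hypotheses on $F$ (for instance bounded $\Delta(F)$, in the spirit of Theorem \ref{thm:Alon-bound}) under which the extremal graphs become explicit. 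Resolving this obstacle is, in my view, the essential difficulty that keeps the conjecture open.
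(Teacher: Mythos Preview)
The statement you were asked to prove is not a theorem in the paper but a conjecture, presented in the concluding remarks without proof. The authors explicitly label it as such and offer no argument beyond noting that it is motivated by \cite{LN23} and the present work. Your recognition that the problem is open, and your diagnosis of why, are therefore entirely in line with the paper.

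Your outlined two-phase strategy is a natural extrapolation of the paper's own proofs, and the obstacle you isolate --- the lack of a structural description of $\EX(n,F)$ when $\ex(n,F)=\tfrac{n^2}{2}-kn+O(1)$ for general $k$ --- is indeed the crux. One small technical slip: the Hong--Nikiforov inversion from $\lambda(G)\ge n-2k-O(1)$ yields only $e(G)\ge \ex(n,F)-O(n)$, not $\ex(n,F)-o(n)$; the constant in front of the linear defect roughly doubles when you square. This does not alter your structural conclusions (the complement still has $O(n)$ edges, so $|S|=O(1)$), but it means the ``near-extremality'' you obtain is coarser than stated. Otherwise your assessment is accurate: without either a stability theorem for this Tur\'an regime or further hypotheses on $F$, the final exchange step cannot be completed, and this is precisely why the authors leave the statement as a conjecture.
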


Finally, we would like to mention that the $A_{\alpha}$-matrix of a graph $G$, as introduced by 
Nikiforov \cite{Nikiforov2017}, is defined as $A_{\alpha} (G):= \alpha D(G) + (1-\alpha) A(G)$, 
where $0\leq \alpha \leq 1$. It is easy to extend our results to the $A_{\alpha}$-matrix for 
$\alpha\in (0, 1/2)$. We refer the exercise to interested readers.

\end{document}